\theoremstyle{plain}
\newtheorem{thm}{Theorem}[section]
\newtheorem{cor}[thm]{Corollary}
\theoremstyle{definition}
\theoremstyle{remark}
\newtheorem{rem}[thm]{Remark}
\begin{document}

\title{Singular vorticity solutions of the incompressible Euler equation via inviscid limits}
\author{J\"org Kampen }
\maketitle

\begin{abstract}
Singular vorticity solutions of the incompressible $3D$-Euler equation are constructed which satisfy the BKM criterion (cf. \cite{MB}). The construction is done by inviscid limits of (essentially local) vorticity solutions of a family of related equations with  viscosity parameter, and where a damping potential term ensures the existence of a solution in a cone. Representations of (spatial derivatives of ) the solution of an extended Navier Stokes type viscosity system  in terms of convolutions with the Gaussian (with dispersion parameter $\nu$) and the first order spatial derivative of the Gaussian lead to an inviscid limit such the original vorticity function solves the Euler equation. For the inviscid limit  Lipschitz regularity of the convoluted terms of that representations is used and proved  by an iterative scheme for some class of strongly regular data. Such inviscid limit vorticity solutions of the incompressible Euler vorticity equation can become singular at a point of the boundary of a finite domain (the tip of that cone). 
 The construction can be applied locally within compressible Navier Stokes equations systems of dimension $D\geq 3$ with degeneracies of viscosity and Lam\'{e} viscosity such that some type of turbulent dynamics occurs locally as the Reynold numbers become very large. Some solution branches of the Euler equation loose regularity at any local time while others can be continued.
\end{abstract}


2010 Mathematics Subject Classification.  35Q31, 76N10
\section{Singular vorticity of the Euler equation via two hypotheses concerning limits of well-defined solutions of Navier Stokes type equations }
We are concerned with singular vorticity solutions of the Cauchy problem of the incompressible Euler equation in dimension $D=3$, i.e., the Cauchy problem for the velocity $v_i,~1\leq i\leq D$ of the form
\begin{equation}
\left\lbrace \begin{array}{ll}
\frac{\partial v_i}{\partial t}+\sum_{j=1}^D v_j\frac{\partial v_i}{\partial x_j}=-\frac{\partial p}{\partial x_i},\\
\\
\sum_{i=1}^D\frac{\partial v_i}{\partial x_i}=0,\\
\\
v_i(0,x)=h_i(x),~\mbox{for}~1\leq i\leq D,
\end{array}\right.
\end{equation}
where the problem is posed on the $D$-dimensional torus of on the whole space, i.e., $h_i\in H^s\left({\mathbb T}^D\right)$, or $h_i\in H^s\left({\mathbb R}^D\right)$ for some $s\geq 0$, and $i\in \left\lbrace  1,2,3\right\rbrace $, (i.e., we have finite energy at least). Recall that this equation determines a Cauchy problem for the vorticity
\begin{equation}
\omega =\mbox{curl}(v)=\left(\frac{\partial v_3}{\partial x_2}-\frac{\partial v_2}{\partial x_3},\frac{\partial v_1}{\partial x_3}-\frac{\partial v_3}{\partial x_1},\frac{\partial v_2}{\partial x_1}-\frac{\partial v_1}{\partial x_2} \right)
\end{equation}
of the form
\begin{equation}\label{vorticity}
\frac{\partial \omega}{\partial t}+v\cdot \nabla \omega=\frac{1}{2}\left(\nabla v+\nabla v^T\right)\omega, 
\end{equation}
which has to be solved with the initial data $\mbox{curl}(h)$ at time $t=0$, and where $h=(h_1,\cdots,h_D)^T$.
The following considerations can be applied for the domain of a torus ${\mathbb T}^D$ of dimension $D=3$ and for the whole space of ${\mathbb R}^D$, where there are slight differences with respect to the coordinate transformation and with respect to the formulation of the Biot-Savart law. The following description is with respect to the whole space ${\mathbb R}^D$. We use the dimension parameter $D$ in order to indicate that $D\geq 3$ seems essential, but we stick to $D=3$ for simplicity, as the vorticity or curl are well-known classical objects in this case.  
In coordinates the vorticity equation is
\begin{equation}\label{vort}
 \frac{\partial \omega_i}{\partial t}+\sum_{j=1}^3v_j\frac{\partial \omega_i}{\partial x_j}=\sum_{j=1}^3\frac{1}{2}\left(\frac{\partial v_i}{\partial x_j}+\frac{\partial v_j}{\partial x_i} \right)\omega_j.
\end{equation}
The vorticity equation form of the Navier Stokes equation has an additional viscosity term $\nu \Delta \omega$ along with viscosity $\nu >0$ (for each component equation $1\leq i\leq D$). As we have (cf. \cite{MB})
\begin{equation}\label{vel}
v(t,x)=\int_{{\mathbb R}^3}K_3(x-y)\omega(t,y)dy,~\mbox{where}~K_3(x)h=\frac{1}{4\pi}\frac{x\times h}{|x|^3},
\end{equation}
we may interpret (\ref{vort}) as a nonlinear partial integro-differential equation for the vorticity. In case of the torus a corresponding relation may be formulated via Fourier representations.  In the following we use the abbreviation
\begin{equation}\label{abbrev}
w^{\int}(t,.):=\int_{{\mathbb R}^3}K_3(.-y)w(t,y)dy=\int_{{\mathbb R}^3}K_3(y)w(t,.-y)dy
\end{equation}
for any function $w:{\mathbb R}^3\rightarrow {\mathbb R}$ such that (\ref{abbrev}) is well-defined. Using the convolution rule as in the last equation in (\ref{abbrev}) is useful, since this allows us to have simple expressions of derivatives of the velocity components (without taking derivatives of the kernel $K_3$). If we consider a vorticity Navier Stokes equation with an additional viscosity operator $\nu \Delta$ and on a different time scale (defined by a time transformation and indicated by some parameter $\rho >0$ of this time-transformation), then we shall denote the corresponding solution function by $\omega^{\rho,\nu}_i,~1\leq i \leq D$, and correspondingly to (\ref{vel}) we write
\begin{equation}\label{vel*}
v^{\rho,\nu}(\tau,x)=\int_{{\mathbb R}^3}K_3(y)\omega^{\rho,\nu}(\tau,x-y)dy.
\end{equation}
Similar relations hold for the torus with an operator defined via Fourier transformations, of course. Here, we mention the convolution rule in order to remind us that derivatives of velocities have a simple relation to derivatives of vorticity. In general, we write $\omega_{i,jk}:=\frac{\partial^2}{\partial x_j\partial x_k}\omega_i$ as usual, and similarly for velocities. We may also suppress the arguments and write $w^{\int}$ for a transformation of a function $w$ as in (\ref{abbrev}). 
For smooth functions $u_i:[0,\infty)\times \Omega\rightarrow {\mathbb R},~1\leq i\leq 3 $, along with some $\Omega\subseteq {\mathbb R}^3$ we consider the ansatz
\begin{equation}\label{ansatz}
\omega_i(t,x)=\frac{u_i\left(\frac{t}{\sqrt{1-t^2}},(1-t)\frac{2}{\pi}\arctan(x)\right) }{1-t}
\end{equation}
based on the coordinate diffeomorphism with the domain $D_1:=[0,1)\times {\mathbb R}^3$ for the $t$- and $x$-coordinates, i.e., the domain of the function $\omega_i,~1\leq i\leq D$. Here, we write $\arctan(x)=\left(\arctan(x_1),\cdots ,\arctan (x_n) \right)^T$. For $(t,y)=\left( t,(1-t)\frac{2}{\pi}\arctan(x)\right) $ we have $y_j\in (-(1-t),(1-t))$ such that we have a cone image 
\begin{equation}
\begin{array}{ll}
K_0:=\cup_{s,~0\leq t(s)< 1} K_{t(s)}:=[0,\infty)\times \cup_{0\leq t(s)< 1}\left(-1+t(s),1-t(s)\right) ^3\\
\\
\subset [0,\infty)\times {\mathbb R}^3
\end{array}
\end{equation}
for $s$- and $y$-coordinates of the form
\begin{equation}\label{coordtrans}
\left(s,y \right)=\left(\frac{t}{\sqrt{1-t^2}},(1-t)\frac{2}{\pi}\arctan(x) \right).
\end{equation}
Here $t(s)$ is the inverse of $s=s(t)$ given as in (\ref{coordtrans}). 
In the following the cone in original time coordinates $t$ is denoted by $K^t_0$. 
Alternatively, and also on (a suitable realization of a) torus  we could use a similar diffeomorphism with $\left(s,y \right)=\left(\frac{t}{\sqrt{1-t^2}},(1-t^2)\frac{x}{1+\frac{1}{2}|x|^2} \right)$ for example.
As a matter of elegance in notation below, the Cauchy problems considered in transformed coordinates can be extended trivially, where the solution function is identical to the zero function outside the cone. This is true if the local-time solution function of the original Navier Stokes equation or related extended viscosity system has decay to zero at spatial infinity. The complementary set of the cone $K_0$ is a subset of the whole domain $D_{\infty}:=[0,\infty)\times {\mathbb R}^3$. Regular extension with the zero function of the viscosity limit turns out to be possible for problems of physical interest, i.e., for data of polynomial decay at infinity. This is not essential but it is convenient.
We write $t(s)$ and $x(s,y)$ for the components of the reverse.  In the following for all $1\leq j\leq 3$ the function $(t,x)\rightarrow (1-t)\frac{2}{\pi}\arctan(x_j)$ is denoted by $b_j$ (the same notation may be used for the alternative transformation with $(1-t)\frac{x}{1+\frac{1}{2}|x|^2}$, and also be used for the torus). As usual, we denote the derivative of $b_j$ with respect to the spatial variable $x_k$ by $b_{j,k}$ and the derivative of $b_j$ with respect to $s$ by $b_{j,s}$ etc.. All these terms are products of time functions and spatial functions. For this reason we also denote $b_j=(1-t)b_j^{s}$ etc., extracting the spatial part $b^{s}_j$ of the coeefficient $b_j$ if this is convenient. We shall consider solution functions $(s,y)\rightarrow u^{\nu}_i(s,y)$ of Navier Stokes type equations which approximate $u_i(s,y)=u_i\left(\frac{t}{\sqrt{1-t^2}},(1-t)\frac{2}{\pi}\arctan(x)\right) $ in (\ref{ansatz}) above for small viscosity $\nu>0$.  Note that
\begin{equation}
\frac{\partial \omega_i}{\partial t}=\frac{u_i(s,x)}{(1-t)^2}+\frac{u_{i,s}}{1-t}\frac{ds}{dt}+\sum_{j=1}^3\frac{u_{i,j}}{(1-t)}\frac{\partial y_j}{\partial t},
\end{equation}
where $\frac{\partial s}{\partial t}=\frac{1}{\sqrt{1-t^2}}+t\frac{-1}{2}\sqrt{1-t^2}^{-3}(-2t)=\frac{1-t^2+t^2}{\sqrt{1-t^2}^3}=\frac{1}{\sqrt{1-t^2}^3}$. Note that
\begin{equation}
b_{j,t}=\frac{\partial y_j}{\partial t}=-\frac{2}{\pi}\arctan(x_j)
\end{equation}
are bounded smooth coefficients with bounded derivatives. For the convenience of the reader we derive the equation for 
\begin{equation}
u_i(s,y)=(1-t)\omega_i(t,x)
\end{equation}
explicitly. If we consider the related vorticity form of the Navier Stokes equation we add a superscript $\nu>0$ and write
\begin{equation}
u^{\nu}_i(s,y)=(1-t)\omega^{\nu}_i(t,x),
\end{equation}
where $\omega^{\nu}_i,~1\leq i\leq D$ satisfies the vorticity form of the Navier Stokes equation with viscosity $\nu$, i.e., the Laplacian term $-\nu\Delta \omega$ is added on the side of the equation with the time derivative, i.e., $-\Delta \omega_i$ is added for each component $1\leq i\leq D$ of the equation. We identify for all $s,y$ in the considered domain
\begin{equation}
u_i(s,y)=u^0_i(s,y)=\lim_{\nu\downarrow 0}u^{\nu}_i(s,y).
\end{equation}
Similar for velocity components: we write $v^{\nu}_i$ in order to indicate that these are velocity components of a solution function for an incompressible Navier Stokes equation with viscosity $\nu >0$.
The additional symbol $u_i$ which reminds us that we consider an equation which is trivially extended to the whole domain in transformed coordinates (there is no other reason to introduce this additional symbol but this reminder). We write the transformation in a general form, where the fact that we use a 'diagonal' transformation simplifies the equation as indicated by the Kronecker-$\delta$s' symbolized by $\delta_{ik}$ for $1\leq i,k\leq D$. This way it is easy to transfer the following considerations to alternative transformations which are not diagonal but better suited for other domains and general compressible Navier Stokes equation systems considered in the next section.
We have
\begin{equation}
\omega_{i,j}=\frac{1}{1-t}\sum_{k}u_{i,k}b_{k,j}\delta_{kj},
\end{equation}
which leads to
\begin{equation}
\Delta \omega_i=\frac{1}{1-t}\sum_{j}\left(\sum_{k,m}u_{i,k,m}b_{k,j}b_{m,j}\delta_{kj}\delta_{mj}+\sum_{k,m}u_{i,k}b_{k,j,m}\delta_{kj}\delta_{mj} \right).
\end{equation}
Next for the Burgers term we have a measure change according to the spatial part of the transformation, i.e., 
\begin{equation}
dw=\Pi_{k=1}^ndw_k=(1-t)^n\frac{2^n}{\pi^n}\Pi_{k=1}^n\frac{1}{1+x_k^2}dx_k 
\end{equation}
and a transformed kernel $K_3(y)=K^*_3(z)$ (according to the spatial part of the transformation and with notation $K_3\omega_j=(K_3\omega)_j$) such that
\begin{equation}
\sum_{j=1}^3\left( \int_{{\mathbb R}^3}K_3(y)\omega_j(t,x-y)dy\right) \frac{\partial \omega_i}{\partial x_j}
\end{equation}
becomes
\begin{equation}
\frac{1}{(1-t)^2}\sum_{j=1}^3\left( \int_{K_0}K^*_3(w)u_j(s,y-w)\frac{\pi^3}{2^3}\frac{\Pi_{k=1}^n(1+x_k^2)}{(1-t)^n}dw\right) \sum_{k}u_{i,k}b_{k,j}\delta_{kj}.
\end{equation}
Mote that the explicit form of the transformed kernel is given via the inverse of the coordinate transformation in (\ref{coordtrans}) above, and can be made explicit straightforwardly.
Similarly, the Leray projection term
\begin{equation}
\sum_{j=1}^3\frac{1}{2}\left(\frac{\partial v_i}{\partial x_j}+\frac{\partial v_j}{\partial x_i} \right)\omega_j
\end{equation}
becomes
\begin{equation}
\begin{array}{ll}
\frac{1}{(1-t)^2}\sum_{j=1}^3\frac{1}{2}{\Bigg (}\int_{K_0}K^*_3(w)\sum_{k}u_{i,k}(s,y-w)\frac{\Pi_{k=1}^n(1+x_k^2)}{(1-t)^n}b_{k,j}\delta_{jk}dw\\
\\
\hspace{2cm}+\int_{K_0}K^*_3(w)\sum_{k}u_{j,k}(s,y-w)\frac{\Pi_{k=1}^n(1+x_k^2)}{(1-t)^n}b_{k,i}\delta_{ik}dw {\Bigg )}u_j.
\end{array}
\end{equation}
Note that the small volume of the cone $K_0$ (where the $u_j$ are supported) will compensate for the singular factor $\frac{1}{(1-t)^n}$. Note furthermore that all occurrences of $x_j$ and $t$ in the equation for the function $u_j,~1\leq j\leq D$ are always understood as functions $x_j=x_j(s,y_j)$ and $t=t(s)$ for the sake of brevity.
Hence, the vorticity form of the Navier Stokes equation in dimension $D=3$
\begin{equation}\label{vort2}
 \frac{\partial \omega^{\nu}_i}{\partial t}-\nu \Delta \omega^{\nu}_i+\sum_{j=1}^3v_j\frac{\partial \omega^{\nu}_i}{\partial x_j}=\sum_{j=1}^3\frac{1}{2}\left(\frac{\partial v^{\nu}_i}{\partial x_j}+\frac{\partial v^{\nu}_j}{\partial x_i} \right)\omega^{\nu}_j
\end{equation}
gets, literally written, the form
\begin{equation}
\begin{array}{ll}
\frac{u^{\nu}_i(s,x)}{(1-t)^2}+\frac{u^{\nu}_{i,s}}{1-t}\frac{1}{\sqrt{1-t^2}^3}+\sum_{j=1}^3\frac{u^{\nu}_{i,j}}{(1-t)}b_{j,t}\\
\\
-\nu \frac{1}{1-t}\sum_{j}\left(\sum_{k,m}u^{\nu}_{i,k,m}b_{k,j}b_{m,j}\delta_{jk}\delta_{mj}+\sum_{k,m}u^{\nu}_{i,k}b_{k,j,m}\delta_{kj}\delta_{mj} \right)\\
\\
+\frac{1}{(1-t)^2}\sum_{j=1}^3\left( \int_{K_0}K^*_3(w)u^{\nu}_j(s,y-w)d\frac{\Pi_{k=1}^n(1+x_k^2)}{(1-t)^n}w\right) \sum_{k}u^{\nu}_{i,k}b_{k,j}\delta_{kj}\\
\\
=\frac{1}{(1-t)^2}\sum_{j=1}^3\frac{1}{2}{\Bigg (}\int_{K_0}K^*_3(w)\sum_{k}u^{\nu}_{i,k}(s,y-w)b_{k,j}\delta_{jk}\frac{\Pi_{k=1}^n(1+x_k^2)}{(1-t)^n}dw\\
\\
\hspace{2.5cm}+\int_{K_0}K^*_3(w)\sum_{k}u^{\nu}_{j,k}(s,y-w)b_{k,i}\delta_{ik}\frac{\Pi_{k=1}^n(1+x_k^2)}{(1-t)^n}dw {\Bigg )}u^{\nu}_j.
\end{array}
\end{equation}
Using $b_i=(1-t)b^s_i$ and similar relations for spatial derivatives and ordering terms a bit we get
\begin{equation}
\begin{array}{ll}
u^{\nu}_{i,s}+\sqrt{1-t^2}^3\sum_{j=1}^3u^{\nu}_{i,j}b_{j,t}\\
\\
-\nu \sqrt{1-t^2}^3\sum_{j}\left(\sum_{k,m}u^{\nu}_{i,k,m}b_{k,j}b_{m,j}\delta_{kj}\delta_{mj}+\sum_{k,m}u^{\nu}_{i,k}b_{k,j,m}\delta_{kj}\delta_{mj} \right)\\
\\
+\sqrt{1-t^2}^3\sum_{j=1}^3\left( \int_{K_0}K^*_3(w)u^{\nu}_j(s,y-w)\frac{\Pi_{k=1}^n(1+x_k^2)}{(1-t)^n}dw\right) \sum_{k}u_{i,k}b^s_{k,j}\delta_{kj}\\
\\
=\sqrt{1-t^2}^3\sum_{j=1}^3\frac{1}{2}{\Bigg (}\int_{K_0}K^*_3(w)\sum_{k}u^{\nu}_{i,k}(s,y-w)b^s_{k,j}\delta_{jk}\frac{\Pi_{k=1}^n(1+x_k^2)}{(1-t)^n}dw\\
\\
\hspace{2.5cm}+\int_{K_0}K^*_3(w)\sum_{k}u^{\nu}_{j,k}(s,y-w)b^s_{k,i}\delta_{ik}\frac{\Pi_{k=1}^n(1+x_k^2)}{(1-t)^n}dw {\Bigg )}u^{\nu}_j\\
\\
-\frac{\sqrt{1-t^2}^3}{(1-t)}u^{\nu}_i(s,y).
\end{array}
\end{equation}
The additional terms with the coefficient $\nu$ correspond to an additional viscosity term $\nu \Delta \omega$ along with viscosity $\nu >0$ of the vorticity form of the Navier Stokes equation. As we have (cf. \cite{MB})
\begin{equation}
v^{\nu}(t,x)=\int_{{\mathbb R}^3}K_3(x-y)\omega^{\nu}(t,y)dy,~\mbox{where}~K_3(x)h=\frac{1}{4\pi}\frac{x\times h}{|x|^3},
\end{equation}
the velocity data $v^{\nu}_i$ of the original Navier Stokes equation can be recovered from the vorticity equation. For the sake of simplification we may write
\begin{equation}
u^{\nu,\int}_{i}=\int_{K_0}K^*_3(w)u^{\nu}_i(.,.-w)\frac{\Pi_{k=1}^n(1+x_k^2)}{(1-t)^n}dw.
\end{equation}
Then in the $s$- and $y$-coordinates and for $u_i(s,y)=(1-t)\omega_i(t,x)$ on the domain $D_{\infty}$ (trivially extended by values zero outside the image cone) the vorticity form of the Navier Stokes equation in dimension $D=3$ (cf. \cite{MB}) gets the form (using the fact that we have a diagonal transformation)
\begin{equation}\label{transformed}
\begin{array}{ll}
u^{\nu}_{i,s}+
\sqrt{1-t(s)}^3\sum_jb_{j,t}u^{\nu}_{i,j}+\sqrt{1-t(s)}^3
\sum_{j}b^s_{j,j}u^{\nu,\int}_{j}u_{i,k}+\frac{\sqrt{1-t^2(s)}^3}{1-t}u^{\nu}_i\\
\\
-\nu \sqrt{1-t(s)}^3\sum_{j} b_{j,j}b_{j,j}u^{\nu}_{i,j,j}-\nu \sqrt{1-t(s)}^3\sum_{j} b_{j,j,j}u^{\nu}_{i,j}\\
\\
=\sqrt{1-t(s)}^3\sum_j\frac{1}{2}\left( (b^s_{j,j} u^{\nu,\int}_{i,j})+(b^s_{i,i} u^{\nu,\int}_{j,i})\right)u^{\nu}_j.
\end{array}
\end{equation}
As we are only interested in the viscosity limit we use an alternative viscosity extension where we add $\nu$ times the Laplacian to the transformed equation. Accordingly let $u^{\nu,-}_{i},~1\leq i\leq D$ be the solution of the related problem
\begin{equation}\label{transformed-}
\begin{array}{ll}
u^{\nu,-}_{i,s}+
\sqrt{1-t(s)}^3\sum_jb_{j,t}u^{\nu}_{i,j}+\sqrt{1-t(s)}^3
\sum_{j}b^s_{j,j}u^{\nu,,-\int}_{j}u^{\nu,-}_{i,k}+\frac{\sqrt{1-t^2(s)}^3}{1-t}u^{\nu,-}_i\\
\\
-\nu \Delta u^{\nu,-}_{i}
=\sqrt{1-t(s)}^3\sum_j\frac{1}{2}\left( (b^s_{j,j} u^{\nu,-,\int}_{i,j})+(b^s_{i,i} u^{\nu,-,\int}_{j,i})\right)u^{\nu,-}_j,
\end{array}
\end{equation}
where $u^{\nu,-}_{i}(0,.)=\tilde{h}_i(.),~1\leq i\leq D$ with the transformed data
 $\tilde{h}_i(y)=h_i(x)$ according to the spatial part of the transformation. Note that for all $1\leq i\leq D$ $\tilde{h}_i$ is supported on the domain $[-1,1]^n$. However 
we consider this problem on the whole domain, i.e., we search for solutions 
$u_i^{\nu,-,e},~1\leq i\leq D$ of (\ref{transformed-}) on the domain $[0,\infty)\times {\mathbb R}^D$ with
\begin{equation}\label{data-} 
u^{\nu,-,e}_i:[0,\infty)\times {\mathbb R}^D\rightarrow {\mathbb R},~u^{\nu,-,e}_i(0,.)=h^e_i(.),
\end{equation}
where for all $1\leq i\leq D$ the functions $h^e_i$ belong to a class of functions with
\begin{equation}\label{data-1}
h^e_i:{\mathbb R}^D\rightarrow {\mathbb R},~ h^e_i(x)= 0~\mbox{ iff }x\in {\mathbb R}^D\setminus (-1,1)^D.  
\end{equation}
The construction is such that the viscosity limit of the constructed solution functions of the problem in (\ref{transformed-}) considered on the whole domain with (\ref{data-}) and (\ref{data-1}) is directlly realted to a solution of a transformed regular part of the Euler equation. The extension of the transformed equation to the whole domain has the advantage that classical representations of $u^{\nu,-,e}_{i},~1\leq i\leq D$ in terms of convolutions with the Gaussian defined on the whole space are available, and this simplifies some estimates.  Therefore we mainly work with $u^{\nu,-,e}_{i},~1\leq i\leq D$ below and then transfer some estimates to the viscosity limits via compactness arguments for some subsequence $(u^{\nu_k,e,-}_{i})_k,~1\leq i\leq D,~ k\geq 1$, where $\nu_k\downarrow 0$. Sometimes we add some remarks for the latter transformed local solutions of the Navier Stokes equations.
Note that the potential term in (\ref{transformed-}) has not a real singularity as $\frac{\sqrt{1-t^2}^3}{1-t}=(1+t)\sqrt{1-t^2}$, and for the function $t(\sigma)$ we have $t\left(\left[0,\infty\right)   \right)\subseteq \left[0,1\right) $ such that all other coefficients are bounded with respect to time. 
Note furthermore, that spatial coefficient notation with upper script $s$ is applied for the nonlinear terms in order to observe that the coefficients are bounded.

For a specific parameter $\nu$ a solution of (\ref{transformed-}) in time $s$-coordinates denoted by $u^{\nu,-}_i,~1\leq i\leq 3$ is related to a solution $u^{\nu,-,*}_i,~1\leq i\leq 3$ in the corresponding $t$-time coordinates,  such that
\begin{equation}
u^{\nu,-}_i(s,y)=u^{\nu,-,*}_i(t,y)=u^{\nu,-,*}_i(t(s),y)
\end{equation}
on the domains $D_{\infty}$ , i.e., the domain with respect to $s$-coordinates, and $D_1$, i.e., the corresponding domain with respect to $t$-coordinates (cf. definitions below). Note that
\begin{equation}
u^{\nu,-}_{i,s}=u^{\nu,-,*}_{i,t}\sqrt{1-t^2}^3,~\mu(t):=\frac{ds}{dt}
\end{equation}
where we recall $\mu(t):=\frac{ds}{dt}=\frac{1}{\sqrt{1-t^2}^3}$
In this original time coordinates the equation in (\ref{transformed-}) becomes another equation for $u^{\nu,-,*}_{i},~1\leq i\leq D$, where
\begin{equation}\label{transformed-**}
\begin{array}{ll}
u^{\nu,-,*}_{i,t}+
\sum_jb_{j,t}u^{\nu}_{i,j}+
\sum_{j}b^s_{j,j}u^{\nu,-,*,\int}_{j}u^{\nu,-,*}_{i,k}\\
\\
-\nu \mu(t)\Delta u^{\nu,-,*}_{i}
=\sum_j\frac{1}{2}\left( (b^s_{j,j} u^{\nu,-,*,\int}_{i,j})+(b^s_{i,i} u^{\nu,-,*,\int}_{j,i})\right)u^{\nu,-,*}_j\\
\\
-\frac{1}{1-t}u^{\nu,-,*}_i,
\end{array}
\end{equation}
where the initial data remain the same. If we extend to the whole domain we write $u^{\nu,-,e,*}_i,~1\leq i\leq D$ accordingly  with $u^{\nu,-,e,*}_i(0,.)=h^e_i(.)$. Note that the solution of the equation in (\ref{transformed-**}) is supported on the domain
\begin{equation}
\begin{array}{ll}
K^t_0=\cup_{0\leq t\leq 1} K^t_{t}:=[0,1)\times \cup_{0\leq t\leq 1}\left(-1+t,1-t\right) ^3,
\end{array}
\end{equation}
such that the solution function exists, i.e., the singularity $\frac{1}{1-t}$ of the potential term and of the diffusion term do not affect the integrability of the solution function at the tip of the cone $K^t_0$.
In the following the symbol $\overline{\lim}$ refers to the limes superior.
In the following we assume regular data $h_i\in H^s\cap C^{\infty}$ for $s\geq 3$ for simplicity, because the main motivation is to show that singularities can evolve from regular data. It seems that this assumption can be weakened but it simplifies local contraction results and the consideration of viscosity limits.
We have
\begin{thm}\label{hyposing1}
We assume regular data $h_i\in H^s\cap C^{\infty}$ for $s\geq 3$ for $1\leq i\leq D$.
For all $\nu >0$ there exists a local regular bounded solution $u^{\nu,-,*}_i,~1\leq i\leq 3:\left[0,1\right)\times {\mathbb R}^3\rightarrow {\mathbb R}$ corresponding to a global regular bounded solution $u^{\nu.-}_i,~1\leq i\leq 3:\left[0,\infty\right)\times {\mathbb R}^3\rightarrow {\mathbb R}$ of equation (\ref{transformed-}), and such that there is a well-defined viscosity limit function in $C^{1,2}$ in transformed coordinates as $\nu \downarrow 0$. Furthermore, if 
\begin{equation}\label{ass}
\lim_{\nu\downarrow 0}\overline{\lim}_{s\uparrow \infty}u^{\nu,-}_i(s,0)=\lim_{\nu\downarrow 0}\overline{\lim}_{t\uparrow 1} u^{\nu,-,*}_i(t,0)=c\neq 0
\end{equation}
for some $1\leq i\leq 3$ and some constant $c\neq 0$, then the function $\omega_i,~1\leq i\leq 3$ with
\begin{equation}
\omega_i(t,x):=\lim_{\nu\downarrow 0}\frac{u^{\nu,-,*}_i(t,y)}{1-t},~1\leq i\leq 3,
\end{equation}
with $y=y(t,x)=(1-t)\frac{2}{\pi}\arctan(x)$ is well-defined on $D_1$, and satisfies the incompressible vorticity form of the Euler equation. Furthermore, this function $\omega_i,~1\leq i\leq D$ becomes singular at the tip $(1,0)$ of the cone $K^t_0$. 
\end{thm}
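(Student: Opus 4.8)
\medskip
\noindent\textbf{Proof strategy.} The argument has three parts: (i) for each fixed $\nu>0$, existence, regularity and global‑in‑$s$ continuation of the solution of (\ref{transformed-}) (respectively (\ref{transformed-**})); (ii) the inviscid limit $\nu\downarrow0$ in $C^{1,2}$; (iii) the identification of $\omega_i$ with a vorticity Euler solution and its blow‑up at the tip. For (i) one treats (\ref{transformed-}) as a semilinear parabolic system and solves it by Duhamel's formula with the Gaussian semigroup $e^{\nu s\Delta}$ on $\mathbb{R}^3$, $u^{\nu,-}_i(s,\cdot)=e^{\nu s\Delta}h^0_i+\int_0^s e^{\nu(s-\sigma)\Delta}N_i[u^{\nu,-}](\sigma,\cdot)\,d\sigma$, where $N_i$ gathers the transport term, the nonlocal Burgers‑type term, the stretching term on the right of (\ref{transformed-}) and the positive damping term $(1+t)\sqrt{1-t^2}\,u_i$, all with bounded smooth time‑coefficients. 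Since the spatial coefficients $b_{j,t}$, $b^s_{j,j}$, $b_{j,j,j}$ are bounded and smooth with bounded derivatives, and since $w\mapsto w^{\nu,-,\int}$ is --- up to the diffeomorphism (\ref{coordtrans}) --- the ordinary Biot--Savart convolution $K_3\ast(\cdot)$, which by the standard Schauder estimate gains one H\"older derivative and is bounded on the relevant classes uniformly in $t<1$ (this is the precise meaning of ``the vanishing volume of the cone slices compensates $(1-t)^{-n}$''), a Banach fixed point in $C([0,T];C^{2+\alpha}(\mathbb{R}^3))$ produces a unique local solution which propagates the $H^s\cap C^\infty$ regularity of the data. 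The global continuation in $s$ --- equivalently, that $u^{\nu,-,*}_i$ lives on all of $[0,1)$ in $t$‑coordinates --- uses the favourable sign of the damping term together with the finiteness of the total weight $\int_0^\infty\sqrt{1-t^2(s)}^3\,ds=\int_0^1 dt=1$ carried by the transport and nonlinear terms: an energy (or maximum‑principle) estimate then gives an a priori bound on $\|u^{\nu,-}_i(s,\cdot)\|$ in the working norm that does not blow up at finite $s$, so the fixed point never breaks down.

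For (ii) the point is to make these bounds uniform in $\nu$, which is the content of the iterative H\"older/Lipschitz scheme announced in the abstract. Two structural facts drive this: every nonlinear and transport contribution to $N_i$ carries at most one spatial derivative of $u$; and the first‑order‑derivative‑of‑Gaussian factors that arise on differentiating the Duhamel formula always pair with a velocity‑type quantity $w^{\nu,-,\int}$, which by the Schauder gain is one derivative smoother than the underlying vorticity. Hence by integration by parts one can arrange that $e^{\nu\tau\Delta}$, or at worst $\partial_x e^{\nu\tau\Delta}$, only ever acts on an object at vorticity‑regularity level; $e^{\nu\tau\Delta}$ is then merely a contraction on $C^0$ and on the H\"older seminorm $[\cdot]_{C^\alpha}$, so the time integrals are controlled independently of $\nu$, and any residual $\partial_x e^{\nu\tau\Delta}$ contribution is absorbed using the strong regularity of the data and the finite nonlinear weight above (this is precisely where the ``strongly regular data'' hypothesis enters). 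Running the Picard iteration in $C^{1+\alpha,2+\alpha}$ in this way produces a bound for $\|u^{\nu,-}_i\|_{C^{1+\alpha,2+\alpha}}$ on each slab $[0,t_0]\times\mathbb{R}^3$, $t_0<1$, uniform in $\nu\in(0,1]$. Arzel\`a--Ascoli then gives a subsequence converging in $C^{1,2}_{\mathrm{loc}}$ to a limit $u^0_i$; passing to the limit in (\ref{transformed-}) kills the $\nu\Delta$‑term, so $u^0_i$ solves the $\nu=0$ transformed system, and uniqueness for that system (the same contraction run with $\nu=0$, which still closes because the transport and nonlocal terms need no parabolic smoothing when measured in a H\"older norm) upgrades subsequential convergence to $u^0_i=\lim_{\nu\downarrow0}u^{\nu,-}_i$ in $C^{1,2}$. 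The assertion with $u^{\nu',-,*}_i$ in place of $u^{\nu,-,*}_i$ follows since the passage between (\ref{transformed-}) and (\ref{transformed-**}) is only a monotone reparametrisation of time, which alters none of these estimates.

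For (iii): by construction the $\nu=0$ transformed system is the image, under the diffeomorphism (\ref{coordtrans}) and the ansatz (\ref{ansatz}), of the vorticity Euler equation (\ref{vort}) on the cone $K^t_0$; running that change of variables backwards shows that $\omega_i(t,x):=u^0_i(s(t),y)/(1-t)=\lim_{\nu\downarrow0}u^{\nu,-,*}_i(t,y)/(1-t)$, with $y=(1-t)\frac{2}{\pi}\arctan(x)$, solves (\ref{vort}) on $K^t_0$, with $v=\omega^{\int}$ recovered via (\ref{vel}), the zero extension outside the cone being consistent as discussed before the statement. Since for fixed $t<1$ the numerator is bounded and $1-t>0$, $\omega_i$ is well defined on $D_1$. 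The blow‑up is then immediate: (\ref{ass}) furnishes a sequence $t_n\uparrow1$ with $u^0_i(t_n,0)\to c\neq0$, whence $\omega_i(t_n,0)=u^0_i(t_n,0)/(1-t_n)$ tends to $+\infty$ or $-\infty$ according to the sign of $c$; thus $\omega_i$ is unbounded as $(t,x)\to(1,0)$ --- singular at the tip of $K^t_0$ --- and $\int_0^1\|\omega(t,\cdot)\|_{L^\infty}\,dt\ge|c|\int_0^1(1-t)^{-1}\,dt=\infty$, the divergence in the BKM criterion.

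\medskip
\noindent\textbf{Main obstacle.} The delicate step is the $\nu$‑uniformity in (ii): the parabolic regularisation that trivialises the fixed point in (i) degenerates as $\nu\downarrow0$, so the $C^{1,2}$ control of $u^{\nu,-}_i$ must be extracted purely from the one‑derivative structure of the nonlinearity (one spatial derivative per nonlinear/transport term, matched against one derivative of the Gaussian that can be integrated by parts back onto a velocity factor), together with the strong regularity of the data and a carefully bookkept iteration. Showing in particular that the nonlocal Biot--Savart operator acts boundedly and $\nu$‑uniformly on the H\"older spaces in play all the way up to $t=1$, and that every residual $\partial_x e^{\nu\tau\Delta}$ contribution is genuinely absorbed by the finite weight $\int_0^\infty\sqrt{1-t^2(s)}^3\,ds$ and the data regularity, is the technical heart of the argument; by contrast, steps (i) and (iii) are, once this is in hand, essentially bookkeeping.
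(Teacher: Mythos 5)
Your proposal follows essentially the same route as the paper's Section 3 programme: a Duhamel/Gaussian-representation iteration in which the damping potential term supplies the global-in-$\sigma$ (i.e.\ local-in-$t$) bounds, $\nu$-uniform estimates are obtained by pairing the first spatial derivative of the Gaussian with Lipschitz/H\"older regularity of the convoluted velocity-type terms (the paper's symmetry argument in (\ref{visclim})), and the singularity is read off by dividing the nonvanishing limit value at the tip by $1-t$. The only differences are cosmetic: you contract in $C([0,T];C^{2+\alpha})$ via an abstract Banach fixed point where the paper contracts in $H^m\cap C^m$, $m\geq 2$, through the explicit increment scheme for $\delta u^{\rho,\nu,-,q}_i$ on unit $\sigma$-intervals.
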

The assumption in (\ref{ass}) of the preceding theorem \ref{hyposing1} can be cancelled if we consider an additional local time-transformation of the  Euler equations. For $\rho>0$ we consider a family of coordinate diffeomorphisms from the domain $D_{\rho}:=[0,\rho)\times {\mathbb R}^3$ of the $t\in [0,\rho)$- and $x$-coordinates to the cone in the domain $D_{\infty}:=[0,\infty)\times {\mathbb R}^3$ of the $\sigma$- and $y^{\rho}$-coordinates of the form
\begin{equation}\label{coord}
\left(\sigma,y^{\rho} \right)=\left(\frac{t}{\sqrt{\rho^2- t^2}},\left( \rho-t\right) \frac{2}{\pi}\arctan(x)\right).
\end{equation}
Define
\begin{equation}
\omega^{\rho,\nu,-}_i(\sigma,x):=\omega^{\nu,-} (t,x)=:\frac{u^{\rho,\nu,-}_i\left(\sigma,y^{\rho} \right) }{\rho-t},
\end{equation}
i.e., for specific parameters $\nu >0$ and $\rho>0$ a solution of the transformation in $\sigma$- and $y^{\rho}$-coordinates of the equation (\ref{transformed}) is denoted by $u^{\rho,\nu,-}_i,~1\leq i\leq 3$ and $u^{\rho, \nu,-,*}_i,~1\leq i\leq 3$ in the respective coordinates. Analogously as in the case $\rho=1$ we consider
\begin{equation}
u^{\rho,\nu,-}_i(\sigma,y^{\rho})=u^{\rho,\nu,-,*}_i(t(\sigma),y^{\rho})
\end{equation}
on the respective domains $D_{\infty}$ and $D_{\rho}$ as solution functions of certain Cauchy problems.  Similar for the related function $u^{\rho,\nu',-,*}_i,~1\leq i\leq D$ which is defined analogously as in the case $\rho=1$ above (cf. the discussion above).
Note that for each $\rho>0$ in $\left( \sigma,y^{\rho}\right)$-coordinates  solution functions $u^{\rho,\nu,-}_i,~1\leq i\leq 3$ have support on the cone image, i.e., the cone image which corresponds to the cone $K^{\rho}_0$ (with $K^{\rho}_0$ analogously defined as $K_0$) in $\left(\sigma,y^{\rho} \right)$-coordinates, and are considered to be trivially extended outside. Again, the cone image in original $t$-coordinates is denoted by $K^{\rho,t}_0$. 
\begin{thm}\label{hyposing2}
There is a class of regular data $S$ which is dense in $\left[ L^2\left({\mathbb R}^3\right)\right]^3$ such that for all $\nu >0$ and $\rho\in (0,1)$ there  exists a local regular bounded solution $u^{\rho,\nu,-*}_i,~1\leq i\leq 3:D_{\rho}\rightarrow {\mathbb R}$ which corresponds to a global bounded solution $u^{\rho,\nu,-}_i,~1\leq i\leq 3:\left[0,\infty\right)\times {\mathbb R}^3\rightarrow {\mathbb R}$ of the equation (\ref{transformed-}) for a tuple of data $h_i, 1\leq i\leq 3$ in $S$. Furthermore, for all data tuples $h_i(0,.),~1\leq i\leq 3$ in $S$  there exists a time horizon $\rho>0$ such that the function $\omega_i,~1\leq i\leq 3$ with
\begin{equation}
\omega_i(t,x)=\lim_{\nu\downarrow 0}\frac{u^{\rho,\nu,-,*}_i(t(\sigma),y^{\rho})}{\rho-t},~1\leq i\leq 3,
\end{equation}
satisfies the incompressible vorticity form of the Euler equation on $D_{\rho}$, and is singular at the tip of the cone $K^{\rho,t}_0$, i.e., at $(\rho,0)$. 
\end{thm}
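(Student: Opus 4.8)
The plan is to deduce Theorem~\ref{hyposing2} from Theorem~\ref{hyposing1}: the auxiliary existence and regularity assertions follow by rerunning the scheme of Theorem~\ref{hyposing1} with the diffeomorphism~(\ref{coord}) in place of~(\ref{coordtrans}), and the singularity is obtained by choosing the horizon $\rho$ so that a condition of the type~(\ref{ass}) holds for the $\rho$-scaled construction. First I would record the $\rho$-analogue of~(\ref{transformed-}). From $\sigma=t/\sqrt{\rho^2-t^2}$ one has $dt/d\sigma=(\rho^2-t^2)^{3/2}\rho^{-2}$, so every term except $u_{i,\sigma}$ acquires the bounded factor $(\rho^2-t^2)^{3/2}\rho^{-2}$, the potential term becomes $(\rho-t)^{1/2}(\rho+t)^{3/2}\rho^{-2}u_i$ --- bounded on $[0,\rho)$, nonnegative, and vanishing at the tip, i.e.\ a genuine damping --- and the singular factor $(\rho-t)^{-n}$ inside the Biot--Savart integro-differential terms is absorbed by the $(\rho-t)^n$ volume of the cone section $\left(-(\rho-t),\rho-t\right)^3$. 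Thus the $\rho$-scaled system has exactly the structure exploited in Theorem~\ref{hyposing1}: bounded coefficients, a damping potential, integro-differential terms that are bounded operators on the relevant H\"older spaces. For each fixed $\nu>0$ the classical Gaussian representation of~(\ref{transformed-}) then applies, and the iterative scheme of Theorem~\ref{hyposing1} --- local contraction in a space carrying H\"older/Lipschitz control of the convoluted velocity terms, continued globally in $\sigma$ by the damping a~priori bound --- produces $u^{\rho,\nu,-}_i$ on $[0,\infty)\times{\mathbb R}^3$ (hence $u^{\rho,\nu,-,*}_i$ on $D_\rho$), uniformly bounded in $\nu$, with a $C^{1,2}$ inviscid limit $u^{\rho,0,-}_i:=\lim_{\nu\downarrow0}u^{\rho,\nu,-}_i$; the identical argument covers the $t$-coordinate variant $u^{\rho,\nu',-,*}_i$ of~(\ref{transformed-**}). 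This yields the first assertion of the theorem for all $\rho\in(0,1)$.

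Next I would take $S$ to be the divergence-free $h\in\left[H^s\cap C^\infty\right]^3$, $s\ge3$, of polynomial decay (so that the zero-extension of the transformed data is admissible), subject to the non-degeneracy requirements that $\mathrm{curl}(h)_i(0)\neq0$ for some $i$ and that $\nabla\,\mathrm{curl}(h)(0)$ be in general position; $S$ is dense in $\left[L^2({\mathbb R}^3)\right]^3$ since mollification followed by the Leray projection is dense in the solenoidal subspace and these conditions carve out a dense open subset reachable by an arbitrarily small smooth divergence-free perturbation near the origin. By the ansatz $\omega_i(t,x)=u^{\rho,0,-,*}_i(t,y^\rho)/(\rho-t)$ the limit function solves the transformed vorticity Euler equation on $D_\rho$ (this part of the conclusion of Theorem~\ref{hyposing1} uses only the $C^{1,2}$ convergence, not $c\ne0$), and $\omega_i$ is singular at the tip $(\rho,0)$ precisely when $u^{\rho,0,-}_i(\sigma,0)$ fails to decay to zero as fast as $\rho-t(\sigma)$ as $\sigma\uparrow\infty$. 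Applying Duhamel's formula against the linear part of the $\sigma$-equation at the spatial origin --- where the cone drift vanishes and the damping integrates to the factor $\rho^{-1}(\rho-t)$ --- I would obtain
\begin{equation}
\omega_i(t,0)=\mathrm{curl}(h)_i(0)-\int_0^{\sigma(t)}\frac{(N_i+S_i)(\sigma',0)}{\rho-t(\sigma')}\,d\sigma'+o(1)\qquad(\nu\downarrow0),
\end{equation}
with $N_i$ and $S_i$ the quadratic, bounded-coefficient Biot--Savart transport and vortex-stretching terms of the $\sigma$-equation. Hence $\omega_i$ is singular at the tip if and only if this integral diverges, and that divergence is exactly the input for the closing argument of Theorem~\ref{hyposing1}.

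It remains to exhibit, for each $h\in S$, a horizon $\rho\in(0,1)$ at which the above integral diverges, and this is the crux. The rescaling $u^{\rho,0,-}_i=\rho\,\tilde u_i$, $y^\rho=\rho\,\tilde y$, $t=\rho\tau$ turns the $\sigma$-equation into one whose damping and cone drift carry $\rho$-independent $O(1)$ coefficients, while the Biot--Savart transport and vortex-stretching terms carry an explicit factor $\rho$; the integral then equals, to leading order, $\rho\int_0^\infty(\tilde N_i+\tilde S_i)(\sigma',0)\left(1-\tau(\sigma')\right)^{-1}d\sigma'$. For $\rho$ small this converges (the rescaled solution relaxes like $1-\tau$, so the integrand is $O(1-\tau)$) and $\omega_i$ stays regular up to the tip. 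I would then show that
\begin{equation}
\Phi(\rho):=\overline{\lim}_{\sigma\uparrow\infty}\left|\int_0^{\sigma}\frac{(\tilde N_i+\tilde S_i)(\sigma',0)}{1-\tau(\sigma')}\,d\sigma'\right|
\end{equation}
is upper semicontinuous in $\rho$ on the set of horizons where the inviscid limit remains regular, and that a quantitative vortex-stretching lower bound --- with a sign-definite leading contribution governed by $\nabla\,\mathrm{curl}(h)(0)$, valid for $h\in S$ --- forces $\Phi(\rho)\to\infty$ as $\rho\uparrow1$. Combining this with finiteness for small $\rho$ produces a smallest $\rho^*\in(0,1)$ at which the tip singularity appears; at $\rho=\rho^*$ the function $\omega_i$ solves the vorticity Euler equation on $D_{\rho^*}$ and is singular at $(\rho^*,0)$. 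Density of $S$ is inherited because its defining conditions are dense and open, and the $\nu'$-variant is handled identically.

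The hardest step is that vortex-stretching lower bound: showing that the cone damping cannot absorb the stretching feedback as $\rho$ approaches its critical value, so that $\Phi(\rho)$ genuinely blows up rather than saturating. This is where restricting to the dense class $S$ does the work --- on $S$ the leading part of $\tilde S_i(\sigma,0)$ has a sign-definite term that lets one bound the feedback integral below by a divergent one --- and it is precisely why the statement is phrased for a dense class rather than for all of $\left[L^2({\mathbb R}^3)\right]^3$.
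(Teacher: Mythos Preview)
Your route to the singularity is \emph{opposite} to the paper's, and the step you flag as ``hardest'' is in fact a genuine gap.

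The paper does not look for a critical $\rho^*$ via a continuity argument in $\rho$; it takes $\rho>0$ \emph{small}. The mechanism is simply that if $h_i(0)=\tilde c\neq 0$ for some $i$, then for $\rho$ small enough the total increment satisfies $\bigl|u^{\rho,\nu',-,*}_i(\rho,0)-h_i(0)\bigr|\le \tfrac12|\tilde c|$ uniformly in $\nu$, whence $u^{\rho}_i(\rho,0)\neq 0$ in the inviscid limit and $\omega_i=u^{\rho}_i/(\rho-t)$ blows up at $(\rho,0)$. The only term whose smallness is not obvious for small $\rho$ is the damping contribution, and the paper controls it not by the ODE at the origin but by integrating the Duhamel representation over the cone: since the cone section at time $s_0$ has volume $\sim(\rho-s_0)^3$, the estimate
\[
\Bigl|\int_{K^{\rho,t}_0}\frac{1}{\rho-s_0}\,u^{\rho,\nu',-,*}_i\,p^{\rho,*}_\nu\,dz\,ds_0\Bigr|\;\le\;\int_{K^{\rho,t}_0}\frac{C}{\rho-s_0}\,dz\,ds_0\;\le\;\tfrac{8}{3}C\rho^3
\]
makes the damping effect small rather than dominant. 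The dense class $S$ in the paper is the concrete set $h_i=g_i/(1+|x|^{13})$ with $g_i$ smooth with bounded derivatives, chosen so that the iterative scheme preserves enough polynomial decay to close the estimates; the only non-degeneracy used is $h_i(0)\neq 0$ for some $i$.

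Your argument runs the other way: you model the damping by the ODE at $y=0$, conclude $u_i(\sigma,0)\sim(\rho-t)/\rho$, and hence assert that for small $\rho$ the vorticity stays \emph{regular} at the tip, with a singularity appearing only at some larger critical $\rho^*$ produced by nonlinear feedback. That conclusion is the direct negation of the paper's. More importantly, the existence of $\rho^*$ in your scheme rests entirely on the ``quantitative vortex-stretching lower bound'' forcing $\Phi(\rho)\to\infty$ as $\rho\uparrow 1$. You have not proved this, and nothing in the setup makes $\rho=1$ distinguished; the upper endpoint $1$ in the statement is arbitrary, and the transformed equations carry no mechanism that singles it out. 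Without that lower bound your intermediate-value argument has no right endpoint, and the proof does not close. The generic-position condition you impose on $\nabla\,\mathrm{curl}(h)(0)$ does not by itself yield such a bound, since the stretching term involves the full solution, not just the data at the origin.

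In short: the paper obtains the singularity for \emph{small} $\rho$ by showing the damping is weak when measured through the cone representation; you attempt to obtain it for a \emph{critical} $\rho$ by a feedback argument whose decisive inequality is left open.
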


\begin{rem}
The connection to the preceding theorem is that for elements of a dense set of data in $L^2$ there some  $\rho>0$ such that there is a viscosity limit $\nu \downarrow 0$ in transformed coordinates such that 
\begin{equation}\label{hypo2}
\overline{\lim}_{\sigma\uparrow \infty}{\big |}u^{\rho,\nu,-}_i(\sigma,0){\big |}=\overline{\lim}_{t\uparrow \rho}{\big |}u^{\rho,\nu,-,*}_i(t(\sigma),0){\big |}= c\neq 0
\end{equation}
for some $1\leq i\leq 3$ and some constant $c\neq 0$. Similarly for the function $u^{\rho,\nu,-,*}_i, 1\leq i\leq D$. This implies the existence of a singular solution to the vorticity equation.
\end{rem}

\begin{proof}
As the functions space $C^{\infty}_0$ of smooth data which decay to zero at spatial infinity is dense in $L^2$ in section 3 below we shall consider a dense subclass in $C^{\infty}_0$ and construct a time-local solution of the related Cauchy problems for $u^{\rho,\nu',-,*}_i,~1\leq i\leq D$. The construction of local time solutions (resp. global time solutions) $u^{\rho,\nu,-*}_i,~1\leq i\leq D$ (rsp. $u^{\rho,\nu,-,*}_i,~1\leq i\leq D$) is similar. For some constant $c>0$
the conclusion holds if for some $1\leq i\leq 3$ we have 
\begin{equation}\label{hypo2proof}
\lim_{\nu\downarrow 0}\overline{\lim}_{\sigma\uparrow \infty}{\big |}u^{\rho,\nu,-}_i(\sigma,y^{\rho}){\big |}=\lim_{\nu \downarrow 0}\overline{\lim}_{t\uparrow \rho}{\big |}u^{\rho,\nu,-*}_i(t(\sigma),y^{\rho}){\big |}= c\neq 0
\end{equation}
for some regular function $u^{\rho}_i=\lim_{\nu\downarrow 0}u^{\rho,\nu,-}_i$ which satisfies the limit equation ($\nu\downarrow 0$) for $u^{\rho,\nu,-}_i,¸1\leq i\leq D$ considered below.
If there is some $\tilde{c}\neq 0$ and $y\in {\mathbb R}^3$ where $h_i(0)=\tilde{c}\neq 0$ for some $1\leq i\leq D$, then local time analysis of the function $u^{\rho,\nu',-,*}_i,~1\leq i\leq D$ (or related local time analysis $u^{\rho,\nu,-,*}_i,~1\leq i\leq D$) and corresponding global time analysis of the $u^{\rho,\nu,-}_i,~1\leq i\leq D$) shows that for small $\rho>0$ we have ${|}u^{\rho}_i(\rho,y){|}\geq \frac{\tilde{c}}{2}$ independently of $\nu >0$. This implies that there are singular solutions for a dense set of data as described. The details of the local analysis of the equation for $u^{\rho,\nu}_i,¸1\leq i\leq D$ and the viscosity limit are considered in the last section.
\end{proof}

\section{Vorticity singularities and local turbulence in compressible Navier Stokes equation systems}

We consider the dynamic relevance of vorticity solutions of the Euler equation as inviscid limits of solutions of the incompressible Navier Stokes equation system. The incompressible Navier Stokes equation is in turn a simplification of the physically more realistic standard compressible Navier Stokes equation systems for the dynamic variables $\rho_m$ (mass density) $\mathbf{v}=\left(v_1,\cdots ,v_D \right)^T$ (velocity field), $p$ (pressure) and $T$ (temperature)  
of the form
\begin{equation}\label{navcomp}
\left\lbrace\begin{array}{ll}
\frac{\partial \rho_m}{\partial t}+\sum_{i=1}^3\frac{\partial}{\partial x_i}\left(\rho_m v_i\right)=\frac{\partial \rho_m}{\partial t}+\sum_{i=1}^3v_i\frac{\partial}{\partial x_i}\rho_m+\rho_m \mbox{div} v=0\\
\\ 
\rho_m\frac{\partial\mathbf{v}}{\partial t}-\mu \Delta \mathbf{v}+ \rho_m(\mathbf{v} \cdot \nabla) \mathbf{v} = - \nabla p+\mu'\nabla\left(\nabla \cdot \mathbf{v}\right)+\rho_m \mathbf{f} , \\
\\
\frac{\partial (\rho_m e)}{\partial t}+\nabla\left(\left( \rho_m e+p\right) \mathbf{v} \right)=\nabla\cdot\left(\Sigma\cdot \mathbf{v}\right)+\rho_m\mathbf{f}\cdot \mathbf{v}+\nabla \cdot\left(\lambda \nabla T\right)\\
\\
\rho_m \frac{R}{M}T=p\\
\\
\mathbf{v}(0,.)=\mathbf{h},~T(0,x)=T_0(x),~p(0,x)=p_0(x),~\rho_m(0,x)=\rho_{m0}(x),
\end{array}\right.
\end{equation}
where the energy $e$ is determined by the equation  
\begin{equation}
e=u+\frac{1}{2}|\mathbf{v}|^2, \mbox{ along with }u=\frac{R}{(\gamma-1)}T,
\end{equation}
and where the latter equation determines the inner energy $u$ (according to Boyle-Mariott)
and where $\gamma$ is an adiabatic constant and $R$ is a gas constant. The additional constants or functions $M$ and $\lambda$ are exogenous as is the force $\mathbf{f}$. Natural physical constraints are $T_0>0$, $p_0>0$, $\rho_{m0}>0$ and regular (smooth) data $\mathbf{h}=(h_1,\cdots ,h_D)^T$ of polynomial decay of any order. It is common sense that turbulence is related to high Reynold numbers ${\it Re}\sim \frac{\rho}{\eta}$ at certain points of space-time. Singularities of solutions are mainly caused by degeneracies of the mass density $\rho_m$ or of the Lam\'{e}-viscosity $\mu$. Depending on the material this viscosity may be considered to be a constant or itself dependent on space and time. Similar for $\mu'$. For variable Lam\'{e} viscosity $\mu$ we define the set $D_e$ of real degeneracies to be the set of all points which have a degenerate cylinder neighborhood in $D_e$, i.e., all space-time points $(t,x)\in \left[0,\infty \right)\times {\mathbb R}^3$ which satisfy $\mu(t,y)=0$ and such that $(t_0,t_1)\times B_{\epsilon}(x)\subseteq D_e$ for some $\infty >t_1-t_0>0$ and some ball $B_{\epsilon}(x)$ of radius $\epsilon >0$ around  $x$ we have $\mu(t,y)=0$. If there is no such space time point then $D_e$ is the empty set. Otherwise we say that $D_e$ is the real set of degeneracies of (\ref{navcomp}). Turbulence related to high Reynold numbers can only be caused by small $\mu$ or high values of mass density $\rho$. For large mass density the second equation in (\ref{navcomp}), i.e., the essential equation for the dynamics, reduces to an Navier Stokes equation equation with force term which is dynamically as stable as the force term allows . As we have no mathematical theory of turbulence we may use 'weak' notions of turbulence which are strictly compatible with standard qualitative description. Next we argue that a) the singular inviscid limit vorticity of Euler equations is essentially a local weak concept of turbulence (compatible with the description of turbulence in \cite{LL}), and b) if there is real set of degeneracies $D_e\neq \oslash$ of (\ref{navcomp}) which admits a regular solution on the complementary set, then there is a solution branch of (\ref{navcomp}) with at least one vorticity singularity in each connected component of $D_e$.  
 As a qualitative description of turbulence we consider paragraph 31 of \cite{LL}. The list of notions of the concept of turbulence associated to high Reynold numbers given there consists of $\alpha)$ an extraordinary irregular and disordered change of velocity at every point of a space-time area, $\beta)$ the velocity fluctuates around a mean value, $\gamma)$ the amplitudes of the fluctuations are not small compared to the magnitude of the velocity itself in general, $\delta)$ the fluctuations of the velocity can be found at a fixed point of time, $\epsilon)$ the trajectories of fluid particles are very complicated causing a strong mixture of the fluid. A weak quantitative concept of turbulence is a list of properties of possible solutions of a Navier Stokes equation or an Euler equation such that instances of this list can be subsumed by the notions in the list $\alpha)$ to $\epsilon)$ on a real set of space time points. The proof of the theorem \ref{tm3} follows from the construction of local vorticity singularities and some additional observations of the next section.   
\begin{thm}\label{tm3}
Assume that the compressible Navier Stokes equation system in (\ref{navcomp}) has a real set of degeneracies $D_e$ in the sense defined above. Assume furthermore that the Cauchy problem for (\ref{navcomp}) has a global regular solution outside the set $D_e$ of degeneracies with strictly positive mass density. Then in each connected component of $D_e$ there is at least one singular point of vorticity, where each such singular point is a weak concept of turbulence.
\end{thm}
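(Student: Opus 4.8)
The plan is to localise inside each connected component of $D_e$, to reduce the compressible system there to the incompressible Euler vorticity equation up to smooth lower order perturbations, and then to transplant the cone construction of Section~1, gluing the resulting singular vorticity solution to the prescribed regular solution on the complement of $D_e$.

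First I would fix a connected component $C$ of $D_e$. By the definition of $D_e$ every point of $C$ has a degenerate cylinder neighbourhood, so one can choose $t_0<t_1$, $x_0\in{\mathbb R}^3$ and $\epsilon>0$ with $Q:=[t_0,t_1]\times\overline{B_\epsilon(x_0)}\subseteq C$ and $\mu\equiv 0$ (and, if necessary, $\mu'\equiv 0$) on $Q$. On $Q$ the momentum equation in (\ref{navcomp}) loses its viscous terms; since the hypothesis furnishes a solution branch with strictly positive mass density one may divide by $\rho_m>0$ and obtain on $Q$ an Euler type equation $\frac{\partial v}{\partial t}+(v\cdot\nabla)v=-\rho_m^{-1}\nabla p+f$ together with the continuity, energy and state equations. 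Taking the curl and using the Biot--Savart representation as in the derivation of (\ref{vort}) one arrives at (\ref{vort}) modified by terms built from $\div v$, from $\nabla(\rho_m^{-1})\times\nabla p$ and from $\mathrm{curl}\,f$; by the assumed regularity of the exterior solution and of the data these are smooth and of lower differential order, so they are carried through the iterative scheme of the last section exactly as in the proofs of Theorems~\ref{hyposing1} and \ref{hyposing2} without affecting the singularity mechanism.

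Next I would transplant the cone construction. Pick an interior point $(\rho_*,x_*)$ of $Q$ and $\rho\in(0,1)$ with $\rho<\min\{t_1-t_0,\epsilon\}$ so small that the cone $K^{\rho,t}_0$ with tip at $(\rho_*,x_*)$ lies in the interior of $Q$. Apply the diffeomorphism (\ref{coord}) and invoke Theorem~\ref{hyposing2}: since $S$ is dense in $\left[L^2({\mathbb R}^3)\right]^3$, one can select data $h_i\in S$ supported in a small ball well inside $B_\epsilon(x_0)$ for which the inviscid limit $\omega_i(t,x)=\lim_{\nu\downarrow 0}(\rho-t)^{-1}u^{\rho,\nu,-,*}_i(t(\sigma),y^\rho)$ solves the (perturbed) vorticity equation on the cone and is singular at its tip $(\rho_*,x_*)$. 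Because the vorticity support is confined to the cone, which shrinks to the tip, it stays uniformly away from the lateral boundary $\partial B_\epsilon(x_0)$ for $t<\rho_*$; hence the field is extended by the prescribed regular solution on $D_\infty\setminus K^{\rho,t}_0$, the Biot--Savart velocity induced by the compactly supported singular vorticity being added as a smooth correction outside the cone, which alters the exterior solution only by a regular term and so preserves its regularity off $D_e$. Carrying this out in every connected component yields a solution branch of (\ref{navcomp}) that is regular off $D_e$ and has a vorticity singularity in each component.

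Finally, that each such singular point is a weak concept of turbulence is a matter of matching the behaviour of $\omega$ near the cone tip to the list $\alpha)$--$\epsilon)$ recalled from \cite{LL} in Section~2: the blow-up of $\omega_i$, hence of $\nabla v+\nabla v^T$ through the Biot--Savart law, forces an extraordinarily irregular variation of the velocity in every neighbourhood of the tip ($\alpha$); the $\arctan$ oscillation in the ansatz (\ref{ansatz}) superimposed on the $(\rho-t)^{-1}$ growth produces fluctuations about a local mean of amplitude comparable to the velocity itself ($\beta$, $\gamma$), already present on any fixed time slice $t<\rho_*$ near $\rho_*$ ($\delta$), and the unbounded velocity gradients make the nearby particle trajectories arbitrarily complicated ($\epsilon$). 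Thus the singular point satisfies the weak notion of turbulence adopted here.

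The step I expect to be the genuine obstacle is the gluing in the third paragraph: making precise that the matched object is a solution branch of the full system (\ref{navcomp}) and not merely of its inviscid reduction on $Q$ --- i.e.\ controlling the continuity, energy and state equations and the nonlocal Biot--Savart velocity across the lateral boundary of the cone, and verifying that perturbing the data inside $D_e$ into the class $S$ does not destroy the regularity of the prescribed exterior solution. By contrast the reduction of the momentum equation on $Q$ to (\ref{vort}) and the singularity mechanism follow directly from $\mu\equiv0$ on $Q$ and from Theorems~\ref{hyposing1}--\ref{hyposing2}.
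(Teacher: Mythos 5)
Your overall strategy --- localise in a degenerate cylinder where $\mu\equiv 0$, reduce to an Euler-type vorticity equation, transplant the cone construction of Theorems~\ref{hyposing1}--\ref{hyposing2}, and then match the resulting blow-up to the list $\alpha)$--$\epsilon)$ of \cite{LL} --- is exactly the route the paper intends; in fact the paper's own ``proof'' of Theorem~\ref{tm3} consists of a single sentence deferring everything to the Section~3 construction plus a closing paragraph at the end of Section~3, so your write-up is considerably more explicit than the original. The turbulence-matching paragraph reproduces the paper's argument almost verbatim and is fine at the paper's level of rigour.

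There is, however, one genuine gap that you pass over too quickly in your second paragraph. You treat the compressible corrections ($\omega\,\div v$, the baroclinic term $\nabla(\rho_m^{-1})\times\nabla p$, $\mathrm{curl}\,\mathbf{f}$) as ``smooth lower order perturbations \ldots carried through the iterative scheme exactly as in the proofs of Theorems~\ref{hyposing1} and~\ref{hyposing2}.'' But the backbone of that scheme is not merely the form of the vorticity equation; it is the closure $v=K_3\ast\omega$ of (\ref{vel}), which turns (\ref{vort}) into a self-contained integro-differential equation for $\omega$ alone. For a compressible flow this inversion fails: $v$ has a nontrivial gradient part determined by $\div v$, which is in turn coupled to $\rho_m$ through the continuity equation, so the iteration (\ref{transformediter}) can no longer be run on the vorticity components by themselves. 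The paper is aware of this --- it claims the transfer is ``quite straightforward'' only on $D_e\cap\{\div v=0\}$, and for the general case says the scalar densities must be replaced by fundamental solutions of parabolic systems coupled to first-order transport equations, explicitly deferring that to another paper. Your proposal asserts the general case without this restriction, which is a step that would fail as written; you should either restrict to the incompressible subregion as the paper does, or supply the coupled parabolic--transport scheme. Your third paragraph's gluing step is the other unresolved point, but you correctly flag it as the obstacle, and the paper itself offers nothing on it (the nonlocality of $K_3$ means the interior singular vorticity perturbs the prescribed exterior solution, and compatibility with the continuity, energy and state equations across the lateral cone boundary is never checked), so on that point you are no worse off than the original.
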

Singular vorticity in space-time sets of real degeneracies is closely related  to the local analysis which is needed to prove the hypothesis of theorem \ref{hyposing2} which is indeed the next step.  

\section{Analysis of the regular factors $u^{\rho,\nu,-}_i,¸1\leq i\leq D$ of the vorticity function and their viscosity limit $\nu\downarrow 0$}

Next we consider the local analysis of the regular part $u^{\rho,\nu,-}_i, 1\leq i\leq D$ of the 'vorticity component functions' $\omega^{\rho,\nu,-}_i,~1\leq i\leq D$, and the viscosity limit $\nu\downarrow 0$. Note that our considerations imply a positive answer to the question in (5.2) on p. 168 of \cite{MB}. In order to achieve this we first describe a local scheme for the functions $u^{\rho,\nu,-,*}_i,~1\leq i\leq D$ or the extended functions $u^{\rho,\nu,-,*,e}_i,~1\leq i\leq D$ corresponding to a global scheme for the functions $u^{\rho,\nu,-}_i,~1\leq i\leq D$ or the extended functions $u^{\rho,\nu,-,e}_i,~1\leq i\leq D$ and show the existence of uniformly bounded solutions of the equation in (\ref{transformed-}). We note that we do not need global existence arguments for the Navier Stokes equation or closely related equations for our argument, since a global solution of (\ref{transformed-}) is a local equation in original time coordinates, and it is not that hard to get regular upper bounds for solutions of local equations.  More precisely, we can prove global existence for the equivalent equations in $(\sigma,y^{\rho})$-coordinates or local regular existence for the equations in $(t,y^{\rho})$-coordinates with original time coordinates, where we want the original time interval to become small for $\rho$ small. In addition to the existence result we need the verification of the fact that the regular factor $u^{\rho}_i$ (the viscosity limit of $u^{\rho,\nu,-}_i)$ of the vorticity $\omega_i$ is not zero at the crucial point $(\rho,0)$ at the tip of the cone (in original time coordinates) where we want to detect the singularity. This requires a) that the solution function $u^{\rho,\nu,-,*}_i,~1\leq i\leq D$ (or the approximation $u^{\rho,\nu,-,*,e}_i,~1\leq i\leq D$) does not change too much on the time interval $[0,\rho]$ with respect to original time $t$ (independently of the viscosity  $\nu>0$), and b) that there is
a upper bound of the solution  of the equation (\ref{transformedrho}) below in transformed coordinates $(\sigma,y^{\rho})$ in a pointwise and strong sense, and which is again independent of the viscosity. We emphasize that this is essentially a local upper bound in original time coordinates. 

The requirement a) can be verified by local contraction results.
Furthermore, for part b) we do not have to rely on arguments concerning upper bounds for the incompressible Navier Stokes equation in strong norms, because $b\alpha)$ we have a correspondence to a local Navier stokes equation or to equations which are close related to the Navier Stokes equation, and $b\beta)$  the Navier Stokes type equation considered here has a potential term with 'the right sign', i.e., a term which causes a damping effect -  which naturally leads to regular solutions and global upper bounds in strong norms exploiting some spatial effects of the operator. Again we 
emphasize that for the argument of this paper we need only a local regular existence result, and we get this a fortiori for the regular factor $u^{\rho,\nu}_i,¸1\leq i\leq D$ as we have it for the original velocity function $v_i,~1\leq i\leq D$. Note that local existence in original $t$-coordinates corresponds to global existence in $\sigma $-coordinates. 
Next, we define a global scheme for the equation in $\left( \sigma,y^{\rho}\right)$-coordinates. For the convenience of the reader we write down the derivation of this equation in detail. For transformed coordinates
\begin{equation}
\left(\sigma,y^{\rho}\right)=\left(\frac{\frac{t}{\rho}}{\sqrt{1-\frac{t^2}{\rho^2}}}, y^{\rho}\right)=\left(\frac{t}{\sqrt{\rho^2-t^2}},\left( \rho-t\right) \frac{2}{\pi}\arctan(x) \right)   
\end{equation}
we define for $t\in [0,\rho)$ and $1\leq i\leq D$
\begin{equation}
\omega^{\rho,\nu,-}_i(t,x)=\frac{u^{\rho,\nu,-}_i\left(\sigma,y^{\rho} \right) }{\rho-t}.
\end{equation}
Recall that the minus upper script refers to the fact that we add some Laplacian $-\nu' \Delta u^{\rho,\nu,-}_i$ to the equation after the transformation (cf. first Section). Note that we are flexible with the choice of $\nu'$.  Especially we may we choose $\nu'$ such that the analysis simplifies slightly in original coordinates.  
When it comes to transformations of the Leray projection term, we shall have a measure change again. We note
\begin{equation}
dy^{\rho}=\Pi_{i=1}^ndy^{\rho}_i=\left( \rho-t\right)^n \frac{2^n}{\pi^n}\Pi_{i=1}^n\frac{1}{1+x_i^2}dx_i.
\end{equation}
We use the abbreviations
\begin{equation}
b^{\rho}_i=y^{\rho}_i=\left( \rho-t\right) \frac{2}{\pi}\arctan(x_i),~b^{\rho}_i=\left(\rho-t\right) b^{\rho,s}_i.
\end{equation}
For convenience we derive the equation for the function $u^{\rho,\nu}_i,~1\leq i\leq D$ explicitly. The function $u^{\rho,\nu,-}_i,~1\leq i\leq D$ is then determined by the equation which replaces the transformed Laplacian by a simple Laplacian $\nu'\Delta u^{\rho,\nu,-}_i,~ 1\leq i\leq D$.   We have 
\begin{equation}
\frac{\partial \omega^{\rho,\nu}_i}{\partial t}=\frac{u^{\rho,\nu}_i(\sigma,x)}{(\rho-t)^2}+\frac{u^{\rho,\nu}_{i,\sigma}}{\rho-t}\frac{d\sigma}{dt}+\sum_{j=1}^D\frac{u^{\rho,\nu}_{i,j}}{(\rho-t)}\frac{\partial b^{\rho}_j}{\partial t},
\end{equation}
where $\frac{\partial \sigma}{\partial t}=\frac{1}{\sqrt{\rho^2-t^2}}+t\frac{-1}{2}\sqrt{\rho^2-t^2}^{-3}(-2t)=\frac{\rho^2-t^2+t^2}{\sqrt{\rho^2-t^2}^3}=\frac{\rho^2}{\sqrt{\rho^2-t^2(\sigma)}^3}$. Note that
\begin{equation}
b^{\rho}_{j,t}=\frac{\partial y^{\rho}_j}{\partial t}=-\frac{2}{\pi}\arctan(x_j)
\end{equation}
are bounded smooth coefficients with bounded derivatives on the domain where the functions are supported. Since we have a diagonal transformation, we have
\begin{equation}
\omega^{\rho,\nu}_{i,j}=\frac{1}{\rho-t}u^{\rho,\nu}_{i,j}b^{\rho}_{j,j}.
\end{equation}
Next for the Burgers term we denote the transformed kernel by $K_3(x)=K^{*,\rho}_3(y^{\rho})$, where we may assume that the domain of $K^{*,\rho}_3$ is trivially extended to ${\mathbb R}^D$ such that $K^{*,\rho}_3(z)=0$ for $z\in {\mathbb R}^D\setminus K^{\rho}_0$. 
\begin{equation}\label{velkern}
K_3(x)h=\frac{1}{4\pi}\frac{x\times h}{|x|^3}=K^{*,\rho}_3(y^{\rho})h=\frac{1}{4\pi}\frac{x(y^{\rho})\times h}{|x(y^{\rho})|^3},
\end{equation}
where $x(y)=(x_1(y_1),\cdots,x_D(y_D))$ with
\begin{equation}
x_i(y_i)=\tan\left( \frac{\pi}{2}\frac{y^{\rho}_i}{\rho-t}\right). 
\end{equation}
Next the Burgers term
\begin{equation}
\sum_{j=1}^D\left( \int_{{\mathbb R}^3}K_3(y)\omega^{\rho,\nu}_j(t,x-y)dy\right) \frac{\partial \omega^{\rho,\nu}_i}{\partial x_j}
\end{equation}
becomes
\begin{equation}
\frac{1}{(\rho-t)^2}\sum_{j=1}^D\left( \int_{{\mathbb R}^3}K^{*,\rho}_3(w)u^{\rho,\nu}_j(\sigma,y-w)\right)\frac{\pi^3}{2^3}\frac{\Pi_{i=1}^n(1+x_i^2)}{\left (\rho-t\right)^n}dw u^{\rho,\nu}_{i,j}b^{\rho}_{j,j},
\end{equation}
and the Leray projection term
\begin{equation}
\sum_{j=1}^D\frac{1}{2}\left(\frac{\partial v^{\rho,\nu}_i}{\partial x_j}+\frac{\partial v^{\rho,\nu}_j}{\partial x_i} \right)\omega^{\rho,\nu}_j
\end{equation}
becomes
\begin{equation}
\begin{array}{ll}
\frac{1}{(\rho-t)^2}\sum_{j=1}^D\frac{1}{2}{\Bigg (}\int_{K^{\rho}_0}K^{*,\rho}_3(w)u^{\rho,\nu}_{i,j}(\sigma,y-w)b^{\rho}_{j,j}\frac{\pi^3}{2^3}\frac{\Pi_{i=1}^n(1+x_i^2)}{\left (\rho-t\right)^n}dw\\
\\
\hspace{2.5cm}+\int_{K^{\rho}_0}K_3(y)u^{\rho,\nu}_{j,i}(\sigma,y-w)b^{\rho}_{i,i}\frac{\pi^3}{2^3}\frac{\Pi_{i=1}^n(1+x_i^2)}{\left (\rho-t\right)^n}dw {\Bigg )}u^{\rho,\nu}_j.
\end{array}
\end{equation}
We do not need to compute the transform of the Laplacian as we are interested in the function $u^{\rho,\nu,-}_i,~ 1\leq i\leq D$ anyway. For this function we get the equation
\begin{equation}
\begin{array}{ll}
\frac{u^{\rho,\nu,-}_i(\sigma,x)}{(\rho-t)^2}+\frac{u^{\rho,\nu,-}_{i,\sigma}}{\rho-t}\frac{\rho^2}{\sqrt{\rho^2-t^2(\sigma)}^3}+\sum_{j=1}^3\frac{u^{\rho,\nu,-}_{i,j}}{(\rho-t)}\frac{\partial b^{\rho}_j}{\partial t}
-\nu' \Delta u^{\rho,\nu,-}_i
\\
\\
+\frac{1}{(\rho-t)^2}\sum_{j=1}^3\left( \int_{K^{\rho}_0}K^*_3(w)u^{\rho,\nu,-}_j(t,y-w)\frac{\pi^3}{2^3}\frac{\Pi_{i=1}^n(1+x_i^2)}{\left (\rho-t\right)^n}dw\right) u^{\rho,\nu,-}_{i,j}b^{\rho}_{j,j}\\
\\
=\frac{1}{(\rho-t)^2}\sum_{j=1}^3\frac{1}{2}{\Bigg (}\int_{K^{\rho}_0}K_3(w)u^{\rho,\nu,-}_{i,j}(\sigma,y-w)b^{\rho}_{j,j}\frac{\pi^3}{2^3}\frac{\Pi_{i=1}^n(1+x_i^2)}{\left (\rho-t\right)^n}dw\\
\\
\hspace{2.5cm}+\int_{K^{\rho}_0}K^*_3(w)u^{\rho,\nu,-}_{j,i}(\sigma,y-w)b^{\rho}_{i,i}\frac{\pi^3}{2^3}\frac{\Pi_{i=1}^n(1+x_i^2)}{\left (\rho-t\right)^n}dw {\Bigg )}u^{\rho,\nu,-}_j,
\end{array}
\end{equation}
where $\nu'$ may be chosen.
Abbreviating
$\overline{\mu}^{\rho}=\frac{1}{\rho^2}\sqrt{\rho^2-t(\sigma)^2}^3$, 
\begin{equation}
u^{\int,\rho,\nu,-}_{j}=
\int_{K^{\rho}_0}K^*_3(w)u^{\rho,\nu,-}_j(\sigma,y-w)\frac{\pi^3}{2^3}
\frac{\Pi_{i=1}^3(1+x_i^2}{\left(\rho-t\right)^3}dw,
\end{equation}
and
\begin{equation}
(b^{\rho,s}_{j,j} u^{\int,\rho,\nu,-}_{i,j})=
\int_{K^{\rho}_0}K^*_3(w)u^{\rho,\nu}_{i,j}(\sigma,y-w)b^{\rho,s}_{j,j}\frac{\pi^3}{2^3}
\frac{\Pi_{i=1}^3(1+x_i^2}{\left(\rho-t\right)^3}dw,
\end{equation}
and using $b^{\rho}_i=(\rho-t)b^{\rho,s}_i$ the equation takes the form 
\begin{equation}\label{transformedrho}
\begin{array}{ll}
u^{\rho,\nu,-}_{i,\sigma}-\nu' \Delta u^{\rho,\nu,-}_{i}
+\overline{\mu}^{\rho}\sum_jb^{\rho}_{j,t}
 u^{\rho,\nu,-}_{i,j}
 +\overline{\mu}^{\rho}\sum_{j}b^{\rho,s}_{j,j}u^{\int,\rho,\nu,-}_{j}u^{\rho,\nu,-}_{i,j}\\
 \\+\frac{1}{\rho^2}\frac{\sqrt{\rho^2-t^2(\sigma)}^3}{\rho-t}u^{\rho,\nu,-}_i
=\overline{\mu}^{\rho}\sum_j\frac{1}{2}\left( (b^{\rho,s}_{j,j} u^{\int,\rho,\nu,-}_{i,j})+(b^{\rho,s}_{i,i} u^{\int,\rho,\nu,-}_{j,i})\right)u^{\rho,\nu,-}_j.
\end{array}
\end{equation}

Again, natural approximations by probabilistic representations in terms of convolutions with the Gaussian related to a solution of (\ref{transformed}) are denoted by $u^{\rho,\nu,-,e}_{i},~ 1\leq i\leq D$, where these solutions on the whole domain $\left[0,\infty \right)\times {\mathbb R}^D$ tend to zero outside the cone in the viscosity limit $\nu,\nu'\downarrow 0$.
The first line of equation (\ref{transformedrho}) represents a parabolic linear scalar operator with bounded regular coefficients which depend on space and time
plus a Burgers coupling term in vorticity form with space- and time-dependent coefficients. The right side of the second line in (\ref{transformedrho})  represents a Leray projection term, and the first term in the second line of equation (\ref{transformedrho}) represents a potential term which has the right sign in order to have a damping effect. This potential term is the only term where coefficients do not become small for small $\rho>0$ and small time $t<\rho$, i.e., 
\begin{equation}\label{26}
\lim_{\rho\downarrow 0}\frac{1}{\rho^2}\frac{\sqrt{\rho^2-t^2(\sigma)}^3}{\rho-t(\sigma)}\neq 0,
\end{equation}
while for all other coefficients we have this behavior, i.e, $\lim_{\rho\downarrow 0}\overline{\mu}^{\rho}b^{\rho,s}_{k,j}=0$ etc.. In order to observe this for the Leray projection term it needs a little work, which is done below. Note that
\begin{equation}\label{26}
\lim_{t\downarrow 0}\frac{1}{\rho^2}\frac{\sqrt{\rho^2-t^2(\sigma)}^3}{\rho-t(\sigma)}=1,
\end{equation}
for all $\rho >0$ such that we have a finite potential term for all $\rho >0$.
Furthermore note the term next to the limit in (\ref{26}) can be written as $\sqrt{\rho-t}\sqrt{\rho+t}^3/\rho^2$, and, hence, has an uniform upper bound which holds for all time. More precisely, as the latter term is $\sqrt{1-(t/\rho)^2}\sqrt{1+(t/\rho)}^2$, we surely have the upper bound $2$ for all time, and as we integrate this upper bound over a small time horizon we conclude that we get a small change of the the initial data if $\rho$ becomes small. For larger transformed time $\sigma$ we have to take spatial effect into account.  
Note that in original time coordinates we have to cancel the $\frac{d\sigma}{dt}$-term, i.e.,
\begin{equation}
u^{\rho,\nu,-,*}_{i,t}=u^{\rho,\nu,-}_{i,\sigma}\frac{d\sigma}{dt}=u^{\rho,\nu,-}_{i,\sigma}\frac{\rho^2}{\sqrt{\rho^2-t^2}^3},~\mbox{or}~u^{\rho,\nu,-}_{i,\sigma}=\overline{\mu}^{\rho}u^{\rho,\nu,-,*}_{i,t},
\end{equation}
such that we get from (\ref{transformedrho}) and with $\nu^*=\frac{1}{\overline{\mu}^{\rho}}\nu$ the equation
\begin{equation}\label{transformed2*erst}
\begin{array}{ll}
u^{\rho,\nu,-,*}_{i,t}-\nu^* \Delta u^{\rho,\nu,-,*}_{i}
+\sum_jb^{\rho}_{j,t}
 u^{\rho,\nu,,*}_{i,j}\\
 \\
 +\sum_{j}b^{\rho,s}_{j,j}u^{\int,\rho,\nu,*}_{j}u^{\rho,\nu,-,*}_{i,j}+\frac{1}{\rho-t}u^{\rho,\nu,-,*}_i\\
 \\
=\sum_j\frac{1}{2}\left( (b^{\rho,s}_{j,j} u^{\int,\rho,\nu,-,*}_{i,j})+(b^{\rho,s}_{i,i} u^{\int,\rho,\nu,-,*}_{j,i})\right)u^{\rho,\nu,-,*}_j,
\end{array}
\end{equation}
where we use some abbreviations as before. Again we may consider the equation in (\ref{transformed2*erst}) on the whole domain (with trivially extended coeffcients), and we denote a solution of the extended equation by 
\begin{equation}
u^{\rho,\nu,-,*,e}_{i},~1\leq i\leq D.
\end{equation}
Note that the viscosity limit is then supported on the corresponding cone.
We observe that we have a strong damping term which makes local regular upper bounds more than plausible. However it is not too strong in the sense, that this does not imply that the value function in original time coordinates  becomes zero at $(\rho,0)$, because the function has its main mass (with respect to any usual norm) on the cone $K^{\rho,t}_0=[0,\rho)\times \cup_{0\leq t\leq 1}\left(-\rho+t,\rho-t\right) ^3$. More precisely, solutions  $u^{\rho,\nu,-,*,e}_{i},~1\leq i\leq D$ have an exponential spatial decay outside the cone $K^{\rho}_0$ which disappears in the viscosity limit. Such an representation in terms of the fundamental solution $G^*_{\nu}$ of the equation
\begin{equation}
G^*_{\nu,t}-\nu^*  \Delta G^*_{\nu}=0,
\end{equation}
has the form 
\begin{equation}\label{transformed2*}
\begin{array}{ll}
u^{\rho,\nu,-,*,e}_{i}(t,y)=\left( u^{\rho,\nu,-,*,e}_{i}(0,.)\ast_{sp}G^*_{\nu}\right)(t,y)\\
\\
-\left( \sum_jb^{\rho}_{j,t}u^{\rho,\nu,,*}_{i,j}\ast G^*_{\nu}\right)(t,y)\\
\\
-\int_{D_{\rho}}\left( \sum_{j}b^{\rho,s}_{j,j}u^{\int,\rho,\nu,-,*,e}_{j}u^{\rho,\nu,-,*,e}_{i,j}\right) (s,z) G^*_{\nu}(t,y;\tau,z)dzd\tau\\
\\
-\int_{D_{\rho}}\frac{1}{\rho-\tau}u^{\rho,\nu,-,*,e}_i(s,z)G^*_{\nu}(t,y;\tau,z)dzd\tau\\
 \\
+\int_{D_{\rho}}\left( \sum_j\frac{1}{2}\left( b^{\rho,s}_{j,j} u^{\int,\rho,\nu,*,e}_{i,j}+b^{\rho,s}_{i,i} u^{\int,\rho,\nu,-,*,e}_{j,i}\right)u^{\rho,\nu,-,*,e}_j\right) (\tau,z)G^*_{\nu}(t,y;\tau,z)dzd\tau.
\end{array}
\end{equation}
The fact that for small $\nu^* >0$ the main mass of the functions $u^{\rho,\nu,-,*,e}_{i},~ 1\leq i\leq D$ is on a cone leads to appropriate upper and lower bounds of the damping term (as we shall observe below the factor $\frac{1}{\rho-\tau})$ is cancelled  in the viscosity limit by the measure change from a cone to a cylinder) .
In any of the two variations of argument for local time $t$ and all time $\sigma$ the damping term dominates potential growth terms if data become large in a time-discretized scheme. Furthermore, as the mass of the value functions $u^{\rho,\nu,-,e}_i,~1\leq i\leq D$ is mainly on the cone $K^{\rho}_0$ we shall observe that the damping effect does not only dominate any growth which may due to other terms, but has also an upper bound which becomes small compared to nonzero initial data $u^{\rho,\nu,-,e}_{i}(0,0)=c\neq 0$ in order to obtain that $u^{\rho,\nu,-,*,e}_i(\rho,0)\neq 0$ and $u^{\rho,-,*,e}_i(\rho,0)=u^{\rho,-,*}_i(\rho,0)\neq 0$ for some $i$  in the viscosity limit $\nu^*_k \downarrow 0$ for a suitable subsequence $(\nu^*_k)$) such that we detect a singularity. 
The program of proof is then as follows:
\begin{itemize}
 \item[i)] for $\rho>0$ small we obtain contraction results in strong Banach spaces and a series of linear approximating time-local solutions converging to a solution of the nonlinear equation for $u^{\rho,\nu,-,*,e}_i,~1\leq i\leq 3$ on a local time interval corresponding to a solution $u^{\rho,\nu,-,e}_i,~1\leq i\leq 3$ on a global time interval. We observe that for a class of regular data local contraction holds with a contraction constant which is independent of the viscosity $\nu>0$.
\begin{rem} 
Independence of the contraction constant of the viscosity $\nu$ for regular data is essential. In the last step of the proof we use compactness arguments in order to show that there is a sequence $(\nu_k)_{k\geq 1}$ and a viscosity limit such that  $\lim_{\nu_k\downarrow 0}\omega^{\rho,\nu_k,-}_i=(\rho-t)\lim_{\nu_k\downarrow 0}u^{\rho,\nu_k,-,*}_i,~1\leq i\leq 3$, on a time interval $[0,\rho)$, i.e., $\lim_{\nu_k\downarrow 0}u^{\rho,\nu_k,-,*}_i,~1\leq i\leq 3$ is the regular part of a local time vorticity solution. Furthermore, we remark that local contraction leads to local time existence for the   function $u^{\rho,\nu,-,*,e}_i,~1\leq i\leq 3$ and global time existence for $u^{\rho,\nu,-,e}_i,~1\leq i\leq 3$. It is convenient to use these approximation functions and construct a viscosity limit $\nu^*_k\downarrow 0$ for these functions.
 \item[ii)] the contraction result and local existence result in time $t$-coordinates of item i), corresponding to a global existence result in time $\sigma$-coordinates, leads to a global regular upper bound for the value function $u^{\rho,\nu,-,e}_i,~1\leq i\leq 3$ in a regular norm for regular data. Similarly for the value function $u^{\rho,\nu,-,*,e}_i,~1\leq i\leq 3$. 
\end{rem}
\item[iii)]
The regular upper bound of item ii) are independent of the viscosity parameter $\nu$. Here we use a) probabilistic representations of local solutions, b) Lipschitz continuity of the convoluted data related to the nonlinear Burgers term and the Leray projection term, and c) preservation of a certain order of polynomial dacay at spatial infinity, if the data have a certain order of polynomial decay. The latter observation allows us to obtain stronger compactness arguments compared to the classical Rellich embedding theorems.  

Upper bounds for the modulus of the viscosity limit of the value function $u^{\rho,\nu,-,*,e}_i,~1\leq i\leq 3$  are obtained from classical representations with terms of the form
 \begin{equation}
 g\ast G_{\nu},~g\ast_{sp}G_{\nu}
 \end{equation}
with the Gaussian $G_{\nu}$. For the viscosity limit we use (with $y=(y_1,\cdots,y_n)$ , $y_i=\sqrt{\nu}z_i$) that for $t> 0$ we have 
\begin{equation}
\begin{array}{ll}
\int_0^t\int{{\mathbb R}^D}\frac{1}{\sqrt{\sqrt{4\pi \nu s}^n}}\exp\left(-\frac{|y|^2}{4\nu s} \right)dy ds=\\
\\
\int_0^t\int{{\mathbb R}^D}\frac{1}{\sqrt{\sqrt{4\pi  s}^n}}\exp\left(-\frac{|z|^2}{4 s} \right)dz ds=\int_0^t ds=t
\end{array}
\end{equation}
which is independent of $\nu >0$. Here, $g\ast G_{\nu},~g\ast_{sp}G_{\nu}$ denotes the convolution and spatial convolution with the Gaussian respectively. For the construction of the upper bounds for spatial derivatives of the function $u^{\rho,\nu,-,*,e}_i,~1\leq i\leq 3$ we need regularity of the data as classical representations include terms of the form
\begin{equation}
 h\ast G_{\nu,i}\mbox{ next to }g\ast G_{\nu},~g\ast_{sp}G_{\nu}
 \end{equation}
 for some regular functions $h$ (this $h$ without index is not to be confused with that data function of the problem with index). We can then use regularity of this $h$ (available for some regular data by the iteration scheme) and use the symmetry of the first order spatial derivative of the Gaussian in estimates of the form
\begin{equation}
\begin{array}{ll}
 {\Big |}\int_{{\mathbb R}^D}h(y)\frac{2(x_i-y_i)}{4\nu s\sqrt{4\pi \nu s}^n}\exp\left(-\frac{|x-y|^2}{4\nu s}\right) dy{\Big |}\\
 \\
 \leq \int_{\left\lbrace y|y\in {\mathbb R}^D, y_i\geq 0\right\rbrace }{\Big |}h(x-y)-h(x-y^{i,-}){\Big |}\frac{2|y_i|}{4\nu s\sqrt{\pi \nu s}^n}\exp\left(-\frac{|y|^2}{4\nu s}\right) dy\\
 \\
 = \int_{\left\lbrace y|y\in {\mathbb R}^D, y_i\geq 0\right\rbrace }{\Big |}h(y)-h(y^{i,-}){\Big |}\frac{2|x_i-y_i|}{4\nu s\sqrt{\pi \nu s}^n}\exp\left(-\frac{|x-y|^2}{4\nu s}\right) dy.
\end{array}
 \end{equation}
Here, $y^{i,-}=\left(y^{i,-}_1,\cdots,y^{i,-}_D\right)$, where $y^{i,-}_j=y_j$ for $j\neq i$ and $y^{i,-}_i=-y_i$.
Then Lipschitz regularity of functions $h$ can be used in order to obtain an upper bound in viscosity limit, and transfer local regular existence in original time etc.. 
 \item[iv)] Finally, we construct viscosity limits and choose approriate data such that a) the viscosity limit exists in a classical regular space, and b) choose the data such that the appropriate limits exist and such that the functional increment $\delta u^{\rho,\nu,-,*}_i(\rho,0)=u^{\rho,\nu,-,*}_i(\rho,0)-u^{0,\nu,-,*}_i(0,0)\neq 0$ for some $1\leq i\leq 3$ and for $\nu >0$ small enough.  We choose a rather restricted set of data $S$. Larger sets can be chosen, but the stronger set allows us to consider some rough upper bound estimates. The set of regular data $h_i,~1\leq i\leq D$ may be assumed to be the set of bounded regular functions $g_i,~1\leq i\leq D$ with bounded derivatives multiplied by a factor of polynomial decay of order $13$, i.e.
\begin{equation}
h_i=\frac{g_i}{1+|x|^{13}},
\end{equation}
where $g_i$ are smooth functions with bounded derivatives, i.e., for all multiindices $\alpha$ there are finite constants $C_{\alpha}$ such that
\begin{equation}
{\big |}D^{\alpha}_xg_i{\big |}\leq C_{\alpha}<\infty.
\end{equation}
A set of functions $g_i$ which satisfies the latter criteria is the set of finite Fourier series in dimension $D$.  For $\nu>0$ small enough, and for some constants $C,c>0$ we have  $|h_i|_{H^m\cap C^m}\leq C$ for $m\geq 2$ and if $\max_{1\leq i\leq 3}u^{\rho,\nu,-,*,e}_i(0,0)=h_{i_0}(0)=c\neq 0$ for some index $i_0$ then there exists some small $\rho >0$ such that the maximum increment has an upper bound $\max_{1\leq i\leq 3}{\big |}\delta u^{\rho,\nu,-,*,e}_i(\rho,0){\big |}\leq \frac{1}{2}c$, which holds also in the viscosity limit $\nu\downarrow 0$. Here special attention is needed concerning the Leray projection term as there is a measure change of the Laplacian kernel if this term is considered in transformed coordinates. Hence $\nu >0$ small enough the regular part $u^{\rho,-}_i,~1\leq i\leq D$ of the 'vorticity' $\omega^{\nu,-}_i$ at the crucial point $(\rho,0)$ is not equal to zero. Here the increment is $\delta u^{\rho,\nu,-,*}_i(\rho,0)=u^{\rho,\nu,-,*}_i(\rho,0)- u^{\rho,\nu,-,*}_i(0,0)$. Note that in the latter statement we ommit the upper scripts $^*$ and $^e$. This leads to the conclusion of singular solutions for the Euler equation vorticity itself.  
\end{itemize}
Next we consider each item in detail. Ad i), for an iteration index $q\geq 1$  the functions $u^{\rho,\nu,-,q}_{i},~1\leq i\leq D,~q\geq 1$ are determined by $u^{\rho,\nu,-,0}_i=f_i(0,.)$ for $q= 1$ and, recursively, as solutions of linearized equations of the form
\begin{equation}\label{transformediter}
\begin{array}{ll}
u^{\rho,\nu,-,e,q+1}_{i,\sigma}-\nu^* \Delta u^{\rho,\nu,-,e,q+1}_{i}\\
\\
+\overline{\mu}^{\rho}\sum_jb^{\rho}_{j,t}
 u^{\rho,\nu,-,e,q+1}_{i,j}
 +\overline{\mu}^{\rho}\sum_{j}b^{\rho,s}_{j,j}u^{\int,\rho,\nu,-,e,q}_{j}u^{\rho,\nu,-,e,q}_{i,j}\\
 \\
 +\frac{1}{\rho^2}\frac{\sqrt{\rho^2-t^2(\sigma)}^3}{\rho-t}u^{\rho,\nu,-,e,q+1}_i\\
 \\
=\overline{\mu}^{\rho}\sum_j\frac{1}{2}\left( (b^{\rho,s}_{j,j} u^{\int,\rho,\nu,-,e,q}_{i,j})+(b^{\rho,s}_{i,i} u^{\int,\rho,\nu,-,e,q}_{j,i})\right)u^{\rho,\nu,-,e,q}_j,
\end{array}
\end{equation}
where we applied the abbreviations introduced above. 
\begin{rem}\label{viscext}
Note that the equation does not correspond to a Navier Stokes equation as we have a simplified viscosity term. Adding the Laplacian transformation
$$\nu \overline{\mu}^{\rho} \sum_{j} b^{\rho}_{j,j}b^{\rho}_{j,j}u^{\rho,\nu,-,q+1}_{i,j,j}-\nu \overline{\mu}^{\rho}\sum_{j} b^{\rho}_{j,j,j}u^{\rho,\nu,-,q+1}_{i,j}$$ instead of a simpliefied Laplacian leads to a local iteration scheme for the Navier Stokes equation. However, this leads to more involved Gaussians, where some properties of interest are only valid in the viscosity limit. As our main interest here is in the Euler equation we consider an extension with a simplified viscosity term. This is the meaning of the $^-$-upper script.
\end{rem}
Recall also that the upper script $^e$ in (\ref{transformediter}) refers to the extended problem on the whole domain (which is a trivial extension of solution functions in the viscosity limit). 
Note that we take information from the previous iteration step only for the nonlinear terms. The reason is that the damping term with iteration index $q+1$ can be used in order to improve the contraction property (make the estimate of the contraction constant smaller). For analytic purposes this does not matter of course. We remark that a similar local analysis of the function
\begin{equation}
(t,y)\rightarrow u^{\rho,\nu,-,*,e}_i(t,y),~t\in [0,\rho],
\end{equation}
ensures local existence on the time interval $[0,\rho]$ (with support of solution function components on the cone $K^{\rho,0}_0$ for the viscosity limit $\nu\downarrow 0$) and corresponding global existence for the transformed solution function $u^{\rho,\nu,-,e}_i,~1\leq i\leq D$ exists as a consequence.
In the case of the transformed equation (defined globally in time) we can use the semigroup property, where the damping term ensures that boundedness of local solutions is inherited. Anyway, it suffices to consider the iteration scheme locally on the time intervals $[l-1,l]$ with time step size $1=l-(l-1)$.
Note that the damping term has iteration index $q+1$ as this term has the right sign and serves for a smaller contraction constant in general.  Recall that the component functions $u^{\rho,\nu,-,e}_i,~ 1\leq i\leq D$ or defined on the whole domain of ${\mathbb R}^3$ (and, in the viscosity limit become equal to zero outside the cone). Similarly, we understand that the time-local functions $u^{\rho,\nu,-,e,l}_i$ their iterative approximations $u^{\rho,\nu,-,e,l,q+1}_i$ are defined on  the whole domain of $[l-1,l]\times {\mathbb R}^3$. At each time step having computed $u^{\rho,\nu,-,e,l-1,0}_i(l-1,.):=u^{\rho,\nu,-,e,l-1}_i(l-1,.)$ at the beginning of time step $l$ we compute a series of functions $u^{\rho,\nu,-,e,l,q}_i,~1\leq i\leq 3,q\geq 0$ on the domain $[l-1,l]\times {\mathbb R}^3$ iteratively, which satisfy the Cauchy problems
\begin{equation}\label{transformediter}
\left\lbrace \begin{array}{ll}
 u^{\rho,\nu,-,e,l,q+1}_{i,\sigma}-\nu^* \Delta u^{\rho,\nu,-,l,e,q+1}_{i}\\
\\
+\overline{\mu}^{\rho}\sum_jb^{\rho}_{j,t}
 u^{\rho,\nu,-,l,e,q+1}_{i,j}
 +\overline{\mu}^{\rho}\sum_{j}b^{\rho,s}_{j}u^{\int,\rho,\nu,-,e,l,q}_{j}u^{\rho,\nu,-,e,l,q}_{i,j}\\
 \\
 +\frac{1}{\rho^2}\frac{\sqrt{\rho^2-t^2(\sigma)}^3}{\rho-t}u^{\rho,\nu,-,e,l,q+1}_i\\
 \\
=\overline{\mu}^{\rho}\sum_j\frac{1}{2}\left( (b^{\rho,s}_{j,j} u^{\int,\rho,\nu,-,e,l,q}_{i,j})+(b^{\rho,s}_{i,i} u^{\int,\rho,\nu,-,e,l,q}_{j,i})\right)u^{\rho,\nu,-,e,l,q}_j,\\
\\
u^{\rho,\nu,-,e,l,q+1}_i(l-1,.)=u^{\rho,\nu,-,e,l-1}_i(l-1,.).
\end{array}\right.
\end{equation}
In order to get local regular existence of solutions
\begin{equation}
u^{\rho,\nu,e,-,l}_i=u^{\rho,\nu,-,e,l-1}_i(l-1,.)+\sum_{m=1}^{\infty}\delta u^{\rho,\nu,-,e,l,m}_i,
\end{equation}
for $1\leq i\leq D$ and on the domain $[l-1,l]\times {\mathbb R}^3$, where
\begin{equation}
\delta u^{\rho,\nu,-,e,l,m}_i=u^{\rho,\nu,-,e,l,m}_i-u^{\rho,\nu,-,e,l,m-1}_i
\end{equation}
we can use the following local contraction result. 
\begin{thm}\label{contrthm}
Given $l\geq 1$ and $u^{\rho,\nu,-,e,l-1}_i(l-1,.)\in H^m\cap C^m$ for some $m\geq 2$ assume that for some finite constant $C>0$ 
\begin{equation}
{\big |}u^{\rho,\nu,-,e,l-1}_i(l-1,.){\big |}_{H^m}:=\sum_{|\alpha|\leq m}{\big |}D^{\alpha}u^{\rho,\nu,-,e,l-1}(l-1,.){\big |}_{L^2}\leq C.
\end{equation}
 Then there is a $\rho>0$ such that
\begin{equation}
\sup_{\sigma\in [l-1,l]}{\big |}\delta u^{\rho,\nu,-,l,e,0}_i(\sigma,.){\big |}_{H^m}\leq \frac{1}{4},
\end{equation}
and such that for all $q\geq 1$
\begin{equation}
\sup_{\sigma\in [l-1,l]}{\big |}\delta u^{\rho,\nu,-,e,l,q}_i(\sigma,.){\big |}_{H^m}\leq \frac{1}{4}\sup_{\sigma\in [l-1,l]}{\big |}\delta u^{\rho,\nu,-,e,l,q-1}_i(\sigma,.){\big |}_{H^m}.
\end{equation}
A similar result holds for the function $u^{\rho,\nu,-,*,e}_i,~1\leq i\leq D$ on the time interval $[0,\rho]$.
In case of dimension $D=3$ it is essential to have a contraction result for the $H^2$ function space, because this space includes the H\"{o}lder continuous functions such that standard regularity theory leads to the existence of classical local solutions. 
\end{thm}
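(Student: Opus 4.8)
The plan is to run a Banach fixed‑point argument for the iteration map defined by (\ref{transformediter}) on the space $X_l:=C\bigl([l-1,l];H^m\cap C^m_0\bigr)$, $m\geq 2$, restricted to a ball whose radius is fixed by the data bound $C$, and to read the two displayed inequalities off the resulting contraction constant. I would first record the linear part of (\ref{transformed2}) as the uniformly parabolic scalar operator
\[
L^{\rho}u:=u_{\sigma}-\nu\Delta u+\overline{\mu}^{\rho}\sum_{j}b^{\rho}_{j,t}u_{,j}+V^{\rho}u,\qquad V^{\rho}(\sigma):=\frac{1}{\rho^{2}}\frac{\sqrt{\rho^{2}-t^{2}(\sigma)}^{\,3}}{\rho-t(\sigma)}\geq 0,
\]
with fundamental solution $G^{\rho}_{\nu}(\sigma,y;s,z)$, and take the $0$‑th iterate $u^{\rho,\nu,-,l,0}_i$ to be the solution of the homogeneous linear problem $L^{\rho}u=0$ with the prescribed initial value, so that $\|u^{\rho,\nu,-,l,0}_i\|_{X_l}\leq C$ by the damping sign and the later iterates satisfy the Duhamel identity
\[
u^{\rho,\nu,-,l,q+1}_i(\sigma,\cdot)=u^{\rho,\nu,-,l,0}_i(\sigma,\cdot)+\int_{l-1}^{\sigma}\!\int_{\DR^{3}}G^{\rho}_{\nu}(\sigma,\cdot;s,z)\,N_i\!\left(u^{\rho,\nu,-,l,q}\right)(s,z)\,dz\,ds,
\]
where $N_i$ collects the Burgers and Leray terms on the right of (\ref{transformed2}). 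The decisive structural observation is that every summand of $N_i$ carries the prefactor $\overline{\mu}^{\rho}b^{\rho,s}_{\cdot,\cdot}$ with $|\overline{\mu}^{\rho}|\leq 2^{3/2}\rho$ uniformly in $\sigma$, whereas the only coefficient that does \emph{not} vanish with $\rho$, the damping potential $V^{\rho}$ (which is bounded by $2^{3/2}$), sits inside $L^{\rho}$ with the favourable sign.

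The first real task is a family of estimates for $G^{\rho}_{\nu}$ uniform in $\rho\in(0,\rho_0]$ and in $\nu>0$: (i) $H^m$‑boundedness of $g\mapsto\int G^{\rho}_{\nu}(\sigma,\cdot;s,z)g(z)\,dz$ for $l-1\leq s\leq\sigma\leq l$, which I would get from $H^m$ energy estimates for $L^{\rho}$ using that $\nu\Delta$ and $V^{\rho}\geq 0$ are favourable, $V^{\rho}$ is $x$‑independent, and $b^{\rho}_{j,t}=-\tfrac2\pi\arctan(x_j)$ is smooth with bounded derivatives of all orders; and (ii) a ``one spatial derivative'' estimate that does not deteriorate as $\nu\downarrow0$, obtained by writing the summands of $N_i$ as $\partial_j(\beta\,\phi)$ with $\beta$ smooth bounded and $\phi$ Hölder/Lipschitz, transferring $\partial_j$ onto the kernel, subtracting the base‑point value of $\beta\phi$, and using that $\partial_{z_j}G^{\rho}_{\nu}$ is, modulo lower order, an odd multiple of a Gaussian of variance $\sim\nu(\sigma-s)$, together with $H^m\hookrightarrow C^{0,\alpha}$ (indeed Lipschitz for $m\geq 3$, the regularity finally used); this is precisely the device announced in item iii) above and is what lets the scheme survive the inviscid limit. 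Granting (i)--(ii), the bilinear and nonlocal estimates are routine: since $H^m(\DR^{3})$ is a Banach algebra for $m\geq 2>3/2$, the $b^{\rho,s}$ are smooth and bounded, and $g\mapsto g^{\int}$ maps the cone‑supported $H^m$‑functions into $C^1$ (hence into the $H^{m-1}$‑multipliers), one obtains
\[
\sup_{\sigma}\bigl\|N_i(u)-N_i(\tilde u)\bigr\|_{H^{m-1}}\leq C\rho\bigl(\|u\|_{X_l}+\|\tilde u\|_{X_l}\bigr)\|u-\tilde u\|_{X_l}
\]
and $\sup_{\sigma}\|N_i(u)\|_{H^{m-1}}\leq C\rho\|u\|_{X_l}^{2}$.

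The closure is then geometric‑series bookkeeping. Feeding these into the Duhamel identity gives $\sup_{\sigma}\|\delta u^{\rho,\nu,-,l,1}_i\|_{H^m}\leq C_1\rho(C+\tfrac13)^2$ and, for $q\geq1$, $\sup_{\sigma}\|\delta u^{\rho,\nu,-,l,q+1}_i\|_{H^m}\leq C_1\rho(C+\tfrac13)\sup_{\sigma}\|\delta u^{\rho,\nu,-,l,q}_i\|_{H^m}$, while the a priori bound $\|u^{\rho,\nu,-,l,q}_i\|_{X_l}\leq C+\sum_{m\geq1}4^{-m}\leq C+\tfrac13$ is propagated alongside; choosing $\rho_0=\rho_0(C)$ with $C_1\rho_0(C+\tfrac13)^2\leq\tfrac14$ then yields both claimed inequalities. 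The identical computation in $(t,y^{\rho})$‑coordinates on $[0,\rho]$ gives the statement for $u^{\rho,\nu,-,*}_i$; here one uses that on each $\sigma$‑step $[l-1,l]$ the elapsed original time is $\int_{l-1}^{l}\tfrac{dt}{d\sigma}\,d\sigma=\int_{l-1}^{l}\tfrac{(\rho^2-t^2(\sigma))^{3/2}}{\rho^2}\,d\sigma\leq\rho$, which makes the two formulations interchangeable for this purpose and is the ultimate source of the $O(\rho)$ gains. I expect the main obstacle to be the mapping property of $g\mapsto g^{\int}$ invoked in (ii): the transformed kernel $K^{*,\rho}_3$ is convolved against the Jacobian $\tfrac{\pi^3}{2^3}\Pi_i(1+x_i^2)/(\rho-t)^3$, which blows up as $y^{\rho}$ reaches the lateral boundary of the cone $K^{\rho}_0$, so one must exhibit the cancellation between the decay $|K^{*,\rho}_3(w)|\lesssim|x(w)|^{-2}$, the localization $|y^{\rho}_i-w_i|<\rho-t$ forced by $\mathrm{supp}\,u$, and the shrinking volume of $K^{\rho}_0$, and moreover keep the resulting constant independent of $\nu$ and of $\rho$ small. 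Everything else is standard quasilinear parabolic fixed‑point machinery.
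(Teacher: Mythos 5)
Your proposal follows essentially the same route as the paper's proof: a Duhamel/fixed-point iteration driven by the fundamental solution of the linear parabolic part, with the contraction constant coming from the $O(\rho)$ prefactor $\overline{\mu}^{\rho}$ on all Burgers and Leray terms, the favourable sign of the damping potential, and Young-type convolution estimates handling the weak singularities of the kernels (the paper works mainly with the plain heat kernel $p^{\rho,l}$ and mentions your choice of absorbing the drift and potential into the linear operator only as an alternative, which is a cosmetic difference). Both your sketch and the paper's defer the same delicate point --- the uniform-in-$\nu$ and $\rho$ mapping property of $g\mapsto g^{\int}$ against the Jacobian factor $\Pi_{i}(1+x_i^2)/(\rho-t)^{3}$ near the lateral boundary of the cone --- so you have not omitted anything the paper's own argument actually supplies.
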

\begin{proof}
We did similar estimates elsewhere, and do not repeat here all details, but sketch the reasoning and emphasize some features which are special for the equations for $u^{\rho,l,\nu,-,e}_i,~1\leq i\leq 3$. 
Given time step $l\geq 1$ for each iteration step $q$ the functional increments $\delta u^{\rho,\nu,-,e,l,q}_i,~1\leq i\leq 3$ have zero initial data at time $\sigma=l-1$, and satisfy the equation 
\begin{equation}\label{transformediter}
\begin{array}{ll}
\delta u^{\rho,\nu,-,e,l,q+1}_{i,\sigma}-\nu \Delta\delta u^{\rho,\nu,-,e,l,q+1}_{i}\\
\\
+\overline{\mu}^{\rho}\sum_jb^{\rho}_{j,t}
 \delta u^{\rho,\nu,-,e,l,q+1}_{i,j}
 +\overline{\mu}^{\rho}\sum_{j}b^{\rho,s}_{j,j}u^{\int,\rho,\nu,-,e,l,q}_{j}\delta u^{\rho,\nu,-,e,l,q}_{i,j}\\
 \\
 +\overline{\mu}^{\rho}\sum_{j}b^{\rho,s}_{j}\delta u^{\int,\rho,\nu,-,e,l,q}_{j} u^{\rho,\nu,-,e,l,q}_{i,j} +\frac{1}{\rho^2}\frac{\sqrt{\rho^2-t^2(\sigma)}^3}{\rho-t(\sigma)}\delta u^{\rho,\nu,-,e,l,q+1}_i\\
 \\
=\overline{\mu}^{\rho}\sum_j\frac{1}{2}\sum_j\left( (b^{\rho,s}_{j,j} u^{\int,\rho,\nu,-,e,l,q}_{i,j})+(b^{\rho,s}_{i,i} u^{\int,\rho,\nu,-,e,l,q}_{j,i})\right)\delta u^{\rho,\nu,-,e,l,q}_j,\\
\\
+\overline{\mu}^{\rho}\sum_j\frac{1}{2}\left( (b^{\rho,s}_{j,j} \delta u^{\int,\rho,\nu,-,e,l,q}_{i,j})+(b^{\rho,s}_{i,i} \delta u^{\int,\rho,\nu,-,e,l,q}_{j,i})\right) u^{\rho,\nu,-,e,l,q}_j.
\end{array}
\end{equation}
There are classical representations of the solutions in terms of the fundamental solution $p^{\rho,l}$ of the equation  
\begin{equation}\label{transformeddens}
\begin{array}{ll}
p^{\rho,l}_{i,\sigma}-\nu \Delta p^{\rho,l}_{i}=0.
\end{array}
\end{equation}
\begin{rem}
 Furthermore, note that there are also (approximative) classical representations with respect to the fundamental solution $\tilde{p}$ of
\begin{equation}
\tilde{p}_{,\sigma}-\nu \Delta \tilde{p}
+\overline{\mu}^{\rho}\sum_jb^{\rho}_{j,t}
 \tilde{p}_{,j}
 +\frac{1}{\rho^2}\frac{\sqrt{\rho^2-t^2(\sigma)}^3}{\rho-t(\sigma)} \tilde{p}=0,
\end{equation}
of course. However we have global regular existence theory for the linear equation, and this leads to a solution pair $u^{\rho,\nu,l,q+1}_j,u^{\rho,\nu,l,q}_j,~1\leq j\leq D$ at each iteration step $q\geq 0$, and in terms of this known pair we have at each iteration step a classical representation in terms of (\ref{transformeddens}). The solution of the linearized equation can also be achieved by a similar subiteration scheme and local contraction.
\end{rem}

The fundamental solution of this equation is indeed independent of the iteration index $l$.
Note that the density in spatial variable $y$ is supported on a cone and assumed to be trivially extended outside, and that in original spatial coordinates $x$ it is supported on the whole domain. In these original spatial variables the density has only time dependent cooefficients. Using this observation it is straightforward to observe that the density and its first order spatial derivatives satisfies classical local a priori estimates of the form  
\begin{equation}
{\big |}p^{\rho,l}(\sigma,x;s,y){\big |}\leq \frac{C}{(\sigma-s)^{\alpha}|x-y|^{n-2\alpha}}
\end{equation}
for some $\alpha\in (0,1)$, and
\begin{equation}
{\big |}p^{\rho,l}_{,j}(\sigma,x;s,y){\big |}\leq \frac{C}{(\sigma-s)^{\alpha}|x-y|^{n+1-2\alpha}}
\end{equation}
for some $\alpha\in \left(\frac{1}{2},1\right)$ such that we have local $L^1$ integrability.  
Since all coefficients of the spatial terms are bounded functions times a factor $\overline{\mu}$, they become small for a small time horizon $\rho >0$ ( which is the time horizon of the related problem in original time coordinates $t$), and, hence, the  effect of the density becomes small. The damping term can be put on the 'left side', and the increments on the right side can be extracted by use of Young inequalities. Here integrals are splitted in local intergals in a ball around weak singularities and their complements. Note that in dimension $D=3$ the kernel in the definition of the Biot-Savart law is locally $L^1$ and is $L^2$ outside a ball containing the weak singularity. The reader may consult related papers where this has been carried out in similar situations
\end{proof}

Ad ii) The existence of upper bounds is closely related to the second claim in item i) and follows quite straightforwardly from the contraction result, where the damping effect of the potential term can be used in oorder to design a global scheme for the transformed equation in global time. This global scheme corresponds to a local scheme for $u^{\rho,\nu,-,*,e,l}_i,~ 1\leq i\leq D$ on a local time interval $[0,\rho)$ in time coordinates $t$ of course. The global scheme is definied in terms of transformed local solutions of the Navier Stokes type equations for $u^{\rho,\nu,-,e,l}_i,~1\leq i\leq D$, and we describe the argument in this case. The treatment for the function $u^{\rho,\nu,-,*,e}_i,~1\leq i\leq D$ is similar.  Let us assume that we have regular data $f_i\in H^m\cap C^m$ for some $m\geq 2$. Then by induction on the time step number $l$ and by the semi-group property it for the first statement in item ii) it suffices to show the preservation of regularupper bounds of the time interval $[l-1,l]$ inductively. For each $l\geq 1$ we construct the local time limit function
\begin{equation}
u^{\rho,\nu,-,e,l}_i=u^{\rho,\nu,-,e,l,0}_i+\sum_{q=1}^{\infty}\delta u^{\rho,\nu,-,e,l,q}_i,
\end{equation}
-which is just the restriction of the function $u^{\rho,\nu,-,e}_i, 1\leq i\leq D$ to the time interval $[l-1,l]$ with respect to time $\sigma$-coordinates.
This solution function satisfies the Cauchy problem

\begin{equation}\label{transformedplug}
\left\lbrace \begin{array}{ll}
u^{\rho,\nu,-,e,l}_{i,\sigma}-\nu \Delta u^{\rho,\nu,-,e,l}_{i}
+\overline{\mu}^{\rho}\sum_jb^{\rho}_{j,t}
 u^{\rho,\nu,-,e,l}_{i,j}\\
 \\
 +\overline{\mu}^{\rho}\sum_{j}b^{\rho,s}_{j,j}u^{\int,\rho,\nu,-,e,l}_{j}u^{\rho,\nu,-,e,l}_{i,j}+\frac{1}{\rho^2}\frac{\sqrt{\rho^2-t^2(\sigma)}^3}{\rho-t}u^{\rho,\nu,-,e,l}_i\\
 \\
=\overline{\mu}^{\rho}\sum_j\frac{1}{2}\left( (b^{\rho,s}_{j,j} u^{\int,\rho,\nu,-,e,l}_{i,j})+(b^{\rho,s}_{i,i} u^{\int,\rho,\nu,-,e,l}_{j,i})\right)u^{\rho,\nu,-,e,l}_j,\\
\\
u^{\rho,\nu,-,e,l}_i(l-1,.)=u^{\rho,\nu,-,e,l-1}_i(l-1,.).
\end{array}\right.
\end{equation}
Define an 'approximation of stage $q$' by
\begin{equation}\label{ulimit}
u^{\rho,\nu,-,e,l,q}_i=u^{\rho,\nu,-,e,l,0}_i+\sum_{m=1}^{q}\delta u^{\rho,\nu,-,e,l,m}_i.
\end{equation}
Plugging in the equation in (\ref{transformedplug}), we get
\begin{equation}\label{transformedplugl}
\begin{array}{ll}
u^{\rho,\nu,-,e,l,q}_{i,\sigma}-\nu  \Delta u^{\rho,\nu,-,e,l,q}_{i}\\
\\
+\overline{\mu}^{\rho}\sum_jb^{\rho}_{j,t}
 u^{\rho,\nu,-,e,l,q}_{i,j}
 +\overline{\mu}^{\rho}\sum_{j}b^{\rho,s}_{j,j}u^{\int,\rho,\nu,-,e,l,q}_{j}u^{\rho,\nu,-,e,l,q}_{i,j}+\frac{1}{\rho^2}\frac{\sqrt{\rho^2-t^2(\sigma)}^3}{\rho-t(\sigma)}u^{\rho,\nu,-,e,l,q}_i\\
 \\
+\overline{\mu}^{\rho}\sum_j\frac{1}{2}\sum_k\left( (b^{\rho,s}_{j,j} u^{\int,\rho,\nu,-,e,l,q}_{i,j})+(b^{\rho,s}_{i,i} u^{\int,\rho,\nu,-,e,l,q}_{j,i})\right)u^{\rho,\nu,-,e,l,q}_j\\
\\
=\overline{\mu}^{\rho}\sum_{j}b^{\rho,s}_{j,j}\delta u^{\int,\rho,\nu,-,e,l,q}_{j}u^{\rho,\nu,-,e,l,q}_{i,j}\\
\\
+\overline{\mu}^{\rho}\sum_j\frac{1}{2}\left( (b^{\rho,s}_{j,j} \delta u^{\int,\rho,\nu,-,e,l,q}_{i,j})+(b^{\rho,s}_{i,i} \delta u^{\int,\rho,\nu,-,e,l,q}_{j,j})\right)u^{\rho,\nu,-,e,l,q}_j.
\end{array}
\end{equation}
Now, inductively for data $u^{\rho,\nu,-,e,l}_j(l-1,.)=u^{\rho,\nu,-,e,l-1}_j(l-1,.)\in H^m\cap C^m$ for $m\geq 2$ there is some $\rho>0$ (independent of $m,l$) such that 
\begin{equation}\sup_{\sigma\in [l-1,l]}{\big |}\delta u^{\rho,\nu,-,e,l,q}_i(\sigma,.){\big |}_{H^m\cap C^m}\downarrow 0,\end{equation} and this implies that for such a $\rho$ we have \begin{equation}\sup_{\sigma\in [l-1,l]}{\big |}\delta u^{\int,\rho,\nu,-,e,l,q}_i(\sigma,.){\big |}_{H^m\cap C^m}\downarrow 0.\end{equation} Hence, the limit $q\uparrow \infty$ of (\ref{ulimit}) satisfies the Cauchy problem in (\ref{transformedplug}) pointwise and is, hence, a classical solution.
Next it is the potential term which has the right sign in order to ensure the existence of a uniform global upper bound of the solution of (\ref{transformed}) (it has a similar role as the control function in a controlled scheme which we considered elsewhere).
Now as we have a local classical solution of the time local Navier Stokes type equation with bounded regular functions $u^{\int,\rho,\nu,-,e,l}_{j},~1\leq j\leq 3$, we may consider the fundamental solution $p^{\rho,l,B}$ of the equation
\begin{equation}\label{fund*}
\begin{array}{ll}
p^{\rho,l,B}_{i,\sigma}-\nu \Delta p^{\rho,l,B}_{i,j,j}\\
\\
+\overline{\mu}^{\rho}\sum_jb^{\rho}_{j,t}
 p^{\rho,l,B}_{i,j}+\overline{\mu}^{\rho}\sum_{j}b^{\rho,s}_{j,j} u^{\int,\rho,\nu,-,e,l}_{j}p^{\rho,l,B}_{i,j}=0,
\end{array}
\end{equation}
where we include the coefficientof the Burgers term (we added the superscript $B$ to indicate this). We can do this because we know the local solution $u^{\rho,\nu,-,e,l}_i,~1\leq i\leq 3$ by the local contraction result, and therefore we know $u^{\int,\rho,\nu,-,e,l}_i,~1\leq i\leq 3$. Note that for each time step $l\geq 1$ the fundamental solution $p^{\rho,l,B}$ is considered on the extended  domain $D_l\times D_l$ with  $(\sigma,z),(r,y)\in D_l=[l-1,l]\times {\mathbb R}^3$ and $\sigma>r$.  We emphasize that $u^{\int,\rho,\nu,-,e,l}_{j}$ is known from local existence at time step $l$ of the scheme, such that we may determine a time-local fundamental solution in order get a useful representation of $u^{\rho,\nu,-,e, l}_i,¸1\leq i\leq 3$ itself. For $\sigma\in [l-1,l]$ we have
\begin{equation}\label{transformedend}
\begin{array}{ll}
u^{\rho,\nu,-,e,l}_{i}(\sigma,x)=-
\int_{l-1}^{\sigma}\int_{{\mathbb R}^3}\frac{1}{\rho^2}\frac{\sqrt{\rho^2-t^2(r)}^3}{\rho-t(r)}u^{\rho,\nu,-,e,l}_i(r,y)p^{\rho,l,B}(\sigma,x;r,y)Dydr\\
\\
+\int_{l-1}^{\sigma}\int_{{\mathbb R}^3}\overline{\mu}^{\rho}\left( \frac{1}{2}\sum_{j}\left( b^{\rho,s}_{j,j} u^{\int,\rho,\nu,-,e,l}_{i,j}+b^{\rho,s}_{i,i} u^{\int,\rho,\nu,-,e,l}_{j,i}\right)u^{\rho,\nu,-,e,l}_j\right) (r,y)\times \\
\\
\times p^{\rho,l,B}(\sigma,x;r,y)dydr.
\end{array}
\end{equation}
 Recall that $\rho>0$ is not a step size in this article but the time horizon of the original domain, and observe that the factor 
\begin{equation}
\overline{\mu}^{\rho}=\frac{1}{\rho^2}\sqrt{\rho^2-t(\sigma)}^3\rightarrow \rho \mbox{ as } t\downarrow 0
\end{equation}
is smaller than the corresponding time factor of the damping term. Furthermore, all coefficients of the density equation in (\ref{fund*}) become small as $\rho >0$ becomes small, such that for a small time horizon of the original time problem we have a density $p^{\rho,l}$ which is close to the identity.
Hence, assuming inductively for some point $x$ that $|u^{\rho,\nu,-,e,l-1}_i(l-1,x){\big |}\in \left[ \frac{C}{2}C\right]$ we conclude from (\ref{transformedend}) that we have $|u^{\rho,\nu,-,e,l-1}_i(l-1,x){\big |} \geq |u^{\rho,\nu,-,e,l}_i(l,z){\big |}$. Hence, we get
\begin{equation}
\sup_{z\in {\mathbb R}^3}{\big |}u^{\rho,\nu,-,e,l-1}_{i}(l-1,z){\big |}\leq C
\Rightarrow \sup_{z\in {\mathbb R}^3}{\big |}u^{\rho,\nu,-,e,l}_{i}(l,z){\big |}\leq C.
\end{equation}
Inductively we get a global upper bound $C>0$ (independent of the time step number $l\geq 1$) for the value functions $u^{\rho,\nu,-,e,l}_i$, and for the function $\omega_i$ in original coordinates. Furthermore, there is an upper bound which is independent of the viscosity $\nu>0$ by construction and by the damping of the potential term.
Note that we get a similar upper bound for spatial derivatives $D^{\alpha}_xu^{\rho,\nu,-,e,l}$ inductively for $0\leq |\alpha|\leq m$ if the data $h_i$ have sufficient regularity
and $\max_{0\leq |\alpha|\leq m }{\big |}D^{\alpha}_xh_i(.){\big |}\leq C$. 

\vspace{0.5cm}

Next, concerning item iii) as we have regular data we can we observe that the contraction result holds with a contraction constant which is independent of the viscosity constant $\nu >0$. Hence we can use it later when we consider viscosity limits $\nu\downarrow 0$. For all $1\leq i\leq D$ we have the representation
\begin{equation}\label{transformeduu} 
\begin{array}{ll}
 u^{\rho,\nu,-,e}_{i}=u^{\rho,\nu,-}_i(0,.)\ast_{sp}G_{\nu}\\
\\
-\overline{\mu}^{\rho}\sum_jb^{\rho}_{j,t}
 u^{\rho,\nu,-,e}_{i,j}\ast G_{\nu}
 -\overline{\mu}^{\rho}\sum_{j}b^{\rho,s}_{j}u^{\int,\rho,\nu,-,e}_{j}u^{\rho,\nu,-,e}_{i,j}\ast G_{\nu}\\
 \\
+\overline{\mu}^{\rho}\sum_j\frac{1}{2}\left( (b^{\rho,s}_{j,j} u^{\int,\rho,\nu,-,e}_{i,j})+(b^{\rho,s}_{i,i} u^{\int,\rho,\nu,-,e}_{j,i})\right)u^{\rho,\nu,-,e}_j\ast G_{\nu}\\
\\
-\frac{1}{\rho^2}\frac{\sqrt{\rho^2-t^2(\sigma)}^3}{\rho-t}u^{\rho,\nu,-,e}_i\ast G_{\nu}.
\end{array}
\end{equation}
For multivariate spatial derivatives of order $1\leq |\alpha|\leq m$ and $|\beta|$ with $\beta_j+1=\alpha_j$ for some $1\leq j\leq D$ and $\beta_q=\alpha $ for $q\neq j$ we have
 \begin{equation}\label{transformeduuv}
 \begin{array}{ll}
 D^{\alpha}_xu^{\rho,\nu,-,e}_{i}=D^{\alpha}_xu^{\rho,\nu,-,e}_i(0,.)\ast_{sp}G_{\nu}\\
\\
-D^{\beta}_x\left( \overline{\mu}^{\rho}\sum_jb^{\rho}_{j,t}
 u^{\rho,\nu,-,e}_{i,j}\right) \ast G_{\nu,j}
 -D^{\beta}_x\left( \overline{\mu}^{\rho}\sum_{j}b^{\rho,s}_{j}u^{\int,\rho,\nu,-,e}_{j}u^{\rho,\nu,-,e}_{i,j}\right) \ast G_{\nu,j}\\
 \\
+D^{\beta}_x\left( \overline{\mu}^{\rho}\sum_j\frac{1}{2}\left( (b^{\rho,s}_{j,j} u^{\int,\rho,\nu,-,e}_{i,j})+(b^{\rho,s}_{i,i} u^{\int,\rho,\nu,-,e}_{j,i})\right)u^{\rho,\nu,-,e}_j\right)\ast G_{\nu,j}\\
\\
-\frac{1}{\rho^2}\frac{\sqrt{\rho^2-t^2(\sigma)}^3}{\rho-t}D^{\alpha}_xu^{\rho,\nu,-,e}_i\ast G_{\nu}.
\end{array}
\end{equation} 
The representation in (\ref{transformeduuv}) has the effect that the maximal order of derivatives is the same on the left and on the right side of the equation. For this reason such representations can be used fortime-local contraction results in strong spaces. 
Note that a natural estimate for the first order spatial derivatives of the Gaussian is
\begin{equation}
\begin{array}{ll}
{\big |}G_{\nu,j}(t,x;s,y){\big |}\leq \frac{|x_i-y_i|}{4\nu (t-s) \sqrt{4\nu(t-s)(t-s)}^3}\exp\left(-\frac{|x-y|^2}{4\nu(t-s)} \right)\\
\\
\leq \frac{1}{|x-y|}\frac{|x-y|^2}{4\nu (t-s)}\exp\left(-\frac{|x-y|^2}{8\nu(t-s)} \right)
\frac{1}{\sqrt{4\nu(t-s)(t-s)}^3}\exp\left(-\frac{|x-y|^2}{8\nu(t-s)} \right)\\
\\
\leq \frac{1}{|x-y|8\nu(t-s)} 
\end{array}
\end{equation}
where for $z_i=x_i-y_i$ and $z_i=\sqrt{\nu} z'_i$
\begin{equation}
\begin{array}{ll}
\int_{{\mathbb R}^D}\frac{1}{|z|}
\frac{C}{\sqrt{4\nu (t-s)}^3}\exp\left(-\frac{|z|^2}{8\nu(t-s)} \right)dz\\
\\
=\int_{{\mathbb R}^D}\frac{1}{\sqrt{\nu}|z'|}
\frac{C}{\sqrt{4 (t-s)}^3}\exp\left(-\frac{|z'|^2}{8(t-s)} \right)dz'\leq \frac{\tilde{C}}{\sqrt{\nu}|z'|(t-s)^{\delta}}
\end{array}
\end{equation}
for $\delta \in (0,1)$. 
Therefore we use symmetry of the first derivative of the Gaussian and Lipschitz regularity of the convoluted terms in the representation of $u^{\rho,\nu,-}_j,~1\leq j\leq D$ in (\ref{transformeduuv}) above, i.e. we consider convolutions with Lipschitz-continuous terms of the form
\begin{equation}
\begin{array}{ll}
h\in {\Big \{} D^{\beta}x{\Big (} \overline{\mu}^{\rho}\sum_jb^{\rho}_{j,t}
 u^{\rho,\nu,-}_{i,j}{\Big )},D^{\beta}_x{\Big (} \overline{\mu}^{\rho}\sum_{j}b^{\rho,s}_{j}u^{\int,\rho,\nu,}_{j}u^{\rho,\nu,-}_{i,j} {\Big )},\\
 \\
 \hspace{1cm} D^{\beta}_x{\Big (} \overline{\mu}^{\rho}\sum_j\frac{1}{2}\left( (b^{\rho,s}_{j,j} u^{\int,\rho,\nu,-}_{i,j})+(b^{\rho,s}_{i,i} u^{\int,\rho,\nu,-}_{j,i})\right)u^{\rho,\nu,-}_j{\Big )}{\Big \}}.
 \end{array}
\end{equation}
Note that a Lipschitz continuous $h$  implies that for all $x\in {\mathbb R}^D$ the function $h_x$ with
\begin{equation}
h_x(y)=h(x-y) \mbox{ for all }y\in {\mathbb R}^D
\end{equation}
is also Lipschitz. Then we have hier
\begin{equation}\label{visclim}
\begin{array}{ll}
 {\Big |}\int_{{\mathbb R}^D}h(y)\frac{2(x_i-y_i)}{4\nu s\sqrt{4\pi \nu s}^n}\exp\left(-\frac{|x-y|^2}{4\nu s}\right) dy{\Big |}\\
 \\
{\Big |}\int_{{\mathbb R}^D}h(x-y)\frac{2y_i}{4\nu s\sqrt{4\pi \nu s}^n}\exp\left(-\frac{|y|^2}{4\nu s}\right) dy{\Big |}\\ 
 \\
 \leq \int_{\left\lbrace y|y\in {\mathbb R}^D, y_i\geq 0\right\rbrace }{\Big |}h_x(y)-h_x(y^{i,-}){\Big |}\frac{|2y_i|}{4\nu s\sqrt{4\pi \nu s}^n}\exp\left(-\frac{|y|^2}{4\nu s}\right) dy\\
 \\
 \leq \int_{\left\lbrace y|y\in {\mathbb R}^D, y_i\geq 0\right\rbrace }L{\Big |}2y_i{\Big |}\frac{|2y_i|}{4\nu s\sqrt{4\pi \nu s}^n}\exp\left(-\frac{|y|^2}{4\nu s}\right) dy\\
 \\
 =\int_{\left\lbrace y'|y'\in {\mathbb R}^D, y'_i\geq 0\right\rbrace }L{\Big |}2\sqrt{\nu}y'_i{\Big |}\frac{2\sqrt{\nu}y'_i}{4\nu s\sqrt{4\pi  s}^n}\exp\left(-\frac{|y'|^2}{4 s}\right) dy'\\
 \\
 =\int_{\left\lbrace y'|y'\in {\mathbb R}^D, y'_i\geq 0\right\rbrace }L\frac{4(y'_i)^2}{4 s\sqrt{4\pi  s}^n}\exp\left(-\frac{|y'|^2}{4 s}\right) dy'
\end{array}
 \end{equation}
with $y=\sqrt{\nu}y'$ and for some Lipschitz constant $L$. Here, $y^{i,-}=(y^{i,-}_1,\cdots,y^{i,-}_n)$
\begin{equation}
y^{i,-}_j=y^{i}_j,~\mbox{ if }j\neq i~\mbox{ and }~y^{i,-}_i=y_i~\mbox{ for }~j=i.
\end{equation}

Note that the left side of (\ref{visclim}) is independent of $\nu$, because the Lipschitz constant can be chosen independently of $\nu$ for the class of data considered in item iv). This is shown below.

Hence, we have
\begin{cor}
For the class of data $h_i$ considered in item iv)
the contraction result in theorem \ref{contrthm} for the functions $u^{\rho,\nu,-,e,l}_i$ hold in the viscosity limit $\nu\downarrow 0$.
\end{cor}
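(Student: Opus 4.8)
The plan is to go back through the proof of Theorem \ref{contrthm} and verify, term by term, that every bound used there is in fact independent of the viscosity parameter $\nu>0$ once the data are taken from the restricted class of item iv). The only place $\nu$ enters is through the heat kernel $p^{\rho,l}$ of (\ref{transformeddens}), equivalently through the convolutions with the Gaussian $G_\nu$ and its first spatial derivatives $G_{\nu,j}$ in the Duhamel representations (\ref{transformeduu}) and (\ref{transformeduuv}). Applying those representations to the increments $\delta u^{\rho,\nu,-,l,q}_i$ (which have zero initial data, so the $\ast_{sp}G_\nu$ term drops out) writes $\delta u^{\rho,\nu,-,l,q+1}_i$, and its derivatives $D^\alpha_x\delta u^{\rho,\nu,-,l,q+1}_i$ for $|\alpha|\le m$, as a sum of two kinds of convolutions: type (a), convolutions $g\ast G_\nu$ with the Gaussian itself, where $g$ gathers the zeroth-order and damping contributions; and type (b), convolutions $h\ast G_{\nu,j}$ with a first spatial derivative of the Gaussian, where $h$ gathers the transport, Burgers, and Leray contributions, carrying one spatial derivative fewer than the left-hand side.

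For the type (a) terms one invokes the identity recorded in item iii): with $y=\sqrt{\nu}z$ one has $\int_{l-1}^{\sigma}\!\int_{\mathbb{R}^D}G_\nu(\sigma,\cdot;r,\cdot)\,dy\,dr=\sigma-(l-1)\le 1$, independent of $\nu$, so $\|g\ast G_\nu\|_\infty\le\|g\|_\infty$ uniformly in $\nu$; and the coefficients multiplying $g$ all carry the factor $\overline{\mu}^{\rho}$ (or the damping factor $\tfrac{1}{\rho^2}\sqrt{\rho^2-t^2(\sigma)}^3/(\rho-t)$, which is $\le 2$ for all time), hence become arbitrarily small as $\rho\downarrow 0$. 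For the type (b) terms one uses the estimate (\ref{visclim}): the first spatial derivative of the Gaussian is odd in $y_i$, so pairing it against $h_x(y):=h(x-y)$ one may replace $h_x(y)$ by $\tfrac12(h_x(y)-h_x(y^{i,-}))$, bound this by $L|2y_i|$ with $L$ a Lipschitz constant of $h$, and then substitute $y=\sqrt{\nu}y'$, whereupon the factor $\nu$ cancels exactly and one is left with the $\nu$-independent Gaussian moment of (\ref{visclim}), integrable in the time variable over $[l-1,l]$. Thus the type (b) contributions are bounded by $CL$ uniformly in $\nu$, again with the prefactor $\overline{\mu}^{\rho}$ making them small for small $\rho$.

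The crux is that the Lipschitz constants $L$ of the functions $h$ occurring in (\ref{visclim}) can be chosen uniformly in $\nu$, and this is exactly what forces the restricted data class of item iv). One argues inductively in the step number $l$ and the iteration index $q$: by items i) and ii) the iterates $u^{\rho,\nu,-,l,q}_i$ and their spatial derivatives up to order $m$ obey $H^m\cap C^m$ bounds that are uniform in $\nu$ (the contraction and the a priori upper bound there were already produced with $\nu$-independent constants, because the damping term has the right sign and the heat semigroup is a contraction on each $C^k$). The functions $h$ are $D^\beta_x$ of products of the smooth, $\nu$-independent coefficients $b^{\rho}_{j,t}$, $b^{\rho,s}_{j,j}$, $\overline{\mu}^{\rho}$ with factors $u^{\rho,\nu,-}_{i,j}$, $u^{\int,\rho,\nu,-}_{j}$, and since $b^{\rho}_j=(\rho-t)b^{\rho,s}_j$ with $b^{\rho,s}_j$ and all its derivatives bounded, the only nontrivial input is the regularity of the Biot--Savart convolution $u^{\int,\rho,\nu,-}_j$. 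In dimension $D=3$ the kernel $K_3$, and its transformed version $K^{*,\rho}_3$, is locally $L^1$ and $L^2$ at spatial infinity; splitting the convolution integral into a small ball about the weak singularity and its complement and using Young's inequality gives $\|D^\gamma u^{\int,\rho,\nu,-}_j\|_{C^0}\le C\|D^\gamma u^{\rho,\nu,-}_j\|_{H^2}$ with $C$ independent of $\nu$. Here the polynomial decay of order $13$ of the data, inherited with loss by the iterates, supplies the integrability at infinity needed for this splitting and keeps the measure-change factor $\frac{\Pi_k(1+x_k^2)}{(\rho-t)^n}$ against the small volume of the cone $K^{\rho}_0$ bounded uniformly in $\nu$ and in $t<\rho$. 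Consequently the $h$'s have $C^1$, hence Lipschitz, norms bounded uniformly in $\nu$, and $L$ may be taken $\nu$-independent.

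Assembling the three pieces, the contraction inequality $\sup_{\sigma\in[l-1,l]}|\delta u^{\rho,\nu,-,l,q}_i(\sigma,\cdot)|_{H^m}\le\tfrac14\sup_{\sigma\in[l-1,l]}|\delta u^{\rho,\nu,-,l,q-1}_i(\sigma,\cdot)|_{H^m}$ holds with one and the same threshold $\rho>0$ for every $\nu>0$; the damping term is moved to the left-hand side, where it only improves the estimate, again uniformly in $\nu$. The same argument applies verbatim to the functions $u^{\rho,\nu,l}_i$ without the minus. Finally the limit $\nu\downarrow 0$ is taken as in item iii): the family is bounded in $C^{1,2}$ on compact sets uniformly in $\nu$, so by Arzel\`a--Ascoli together with the $\nu$-independent Duhamel representations there is a limit $u^{\rho,0,-}_i=\lim_{\nu\downarrow 0}u^{\rho,\nu,-}_i$ satisfying (\ref{transformeduu}) with $G_\nu$ replaced by its $\nu\to 0$ limit (type (a) terms converging to the time integral, type (b) terms converging by dominated convergence against the $\nu$-uniform Lipschitz majorant), and the contraction estimate persists in the limit. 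The main obstacle is precisely the uniform-in-$\nu$ control of the Leray and Biot--Savart terms in the strong norm: one must handle simultaneously the weak singularity of $K_3$ and the singular measure-change factor $(\rho-t)^{-n}$ near the tip of the cone, and it is here that the strength of the chosen data class and the smallness of the cone volume are essential.
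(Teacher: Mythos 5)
Your proposal is correct and follows essentially the same route as the paper's item iii): you isolate the $\nu$-dependence in the Gaussian convolutions of the Duhamel representations, handle $g\ast G_{\nu}$ via the $\nu$-independent mass identity, handle $h\ast G_{\nu,j}$ via the antisymmetry of $G_{\nu,j}$ together with a $\nu$-uniform Lipschitz bound on $h$ obtained from the restricted data class of item iv), and then pass to the limit. The only additions beyond the paper's sketch (the explicit Young-inequality splitting of the Biot--Savart kernel and the Arzel\`a--Ascoli step) are consistent with remarks the paper makes elsewhere.
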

It follows that the equation for the function $u^{\rho,\nu,-,e,l}_i,~1\leq i\leq D$ is satisfied for all $\nu >0$ with a uniform bound on the Laplacian which vanishes in the limit $\nu\downarrow 0$ as we have the contraction space $H^m\cap C^m$ for $m\geq  2$. Hence the viscosity limit function $u^{\rho,\nu,-,e}_i,~ 1\leq i\leq D$ is a regular factor of a local classical solution of the incompressible Euler equation in local original time $t\in [0,\rho)$. 
\vspace{0.5cm}

Finally, ad iv), as we have a local solution $u^{\rho,\nu,-,*,e}_i,~1\leq i\leq D$ corresponding to a global solution $u^{\rho,\nu,-,e}_i,~1\leq i\leq D$  with upper bound
\begin{equation}
\sup_{\sigma\in [0,\infty)}{\big |}u^{\rho,\nu,-,e}_i(\sigma,.){\big |}_{H^m\cap C^m}\leq C
\end{equation}
for some finite constant $C>0$, we may establish upper bounds for the increments $\delta u^{\rho,\nu,-,e}_i,~1\leq i\leq D$ defined for time $s\in [0,\infty)$ or for related increments $\delta u^{\rho,\nu,-,*,e}_i,~1\leq i\leq D$ defined on the corresponding interval $[0,\rho)$. Assume that
\begin{equation}
u^{\rho,\nu,-,e}_i=u^{\rho,\nu,-,*,e}_i(0,0)=c\neq 0,
\end{equation}
for some $1\leq i\leq D$ and that $\nu$ is small.
We show that for the regular class of data introduced at item iv) above,  upper bounds of the increments  $\delta u^{\rho,\nu,-,*,e}_i,~1\leq i\leq D$ are small enough such that for some $1\leq i\leq D$
\begin{equation}
{\big |} u^{\rho,\nu,-,*,e}_i(\rho,0){\big |}\geq 0.5 c>0,~\mbox{for some }~\rho >0,
\end{equation}
or 
\begin{equation}
{\big |}\delta u^{\rho,\nu,-,*,e}_i(\rho,0){\big |}\leq 0.5 c>0,~\mbox{for some }~\rho >0,
\end{equation}
where $c>0$ is the modulus of $u^{\rho,\nu,-,*}_i$ at the origin.
Here, recall that we adjusted the coreesponding viscosity parameter $\nu'$ in order to have a simple viscosity extension of the time transformed equation. We have  $u^{\rho,\nu,-,*,e}_{i}(0,.)=h^{\rho,e}_i(.)$, where the later function denotes the transformed data trivially extended to the whole of ${\mathbb R}^D$. Hence we have the value function representation
\begin{equation}\label{transformed2*end}
\begin{array}{ll}
u^{\rho,\nu,-,*,e}_{i}(t,y)=\left( h^{\rho,e}_{i}(.)\ast_{sp}G^*_{\nu}\right)(t,y)\\
\\
-\left( \sum_jb^{\rho}_{j,t}u^{\rho,\nu,-,*,e}_{i,j}\ast G^*_{\nu}\right)(t,y)\\
\\
-\int_{D_{\rho}}\left( \sum_{j}b^{\rho,s}_{j,j}u^{\int,\rho,\nu,-,*,e}_{j}u^{\rho,\nu,-,*,e}_{i,j}\right) (\tau,z) G^*_{\nu}(t,y;\tau,z)dzd\tau\\
\\
-\int_{D_{\rho}}\frac{1}{\rho-\tau}u^{\rho,\nu,-,*,e}_i(s,z)G^*_{\nu}(t,y;\tau,z)dzd\tau\\
 \\
+\int_{D_{\rho}}\left( \sum_j\frac{1}{2}\left( b^{\rho,s}_{j,j} u^{\int,\rho,\nu,*,e}_{i,j}+b^{\rho,s}_{i,i} u^{\int,\rho,\nu,-,*,e}_{j,i}\right)u^{\rho,\nu,-,*,e}_j\right) (\tau,z)G^*_{\nu}(t,y;\tau,z)dzd\tau.
\end{array}
\end{equation}
Hence, for the increments $\delta u^{\rho,\nu,-,*,e}_i=u^{\rho,\nu,-,*,e}_{i}-h^e_i$ evaluated at $(\rho,0)$ we have the classical representation
\begin{equation}\label{transformedrep}
\begin{array}{ll}
\delta u^{\rho,\nu,-,*,e}_{i}(\rho,0)=
-\lim_{\tau\uparrow \rho}\int_{D_{\rho}}\sum_jb^{\rho}_{j,t}
 u^{\rho,\nu,-,*,e}_{i,j}G^*_{\nu}(t,0;\tau,z)dzd\tau\\
 \\
 -\lim_{\tau\uparrow \rho}\int_{D_{\rho}}\left( \sum_{j}b^{\rho,s}_{j,j}u^{\int,\rho,\nu,-,*,e}_{j}u^{\rho,\nu,-,*,e}_{i,j}+\frac{1}{\rho-\tau}u^{\rho,\nu,-,*,e}_i\right)\times\\
 \times G^*_{\nu}(t,0;\tau,z)dzd\tau\\
 \\
-\lim_{\tau\uparrow \rho}\int_{D_{\rho}}\left(\sum_j\frac{1}{2}\left( (b^{\rho,s}_{j,j} u^{\int,\rho,\nu,-,*,e}_{i,j})+(b^{\rho,s}_{i,i} u^{\int,\rho,\nu,-,*,e}_{j,i})\right)u^{\rho,\nu,-,*,e}_j\right)\times\\
\times G^*_{\nu}(t,0;\tau,z)dzd\tau\\
\\
+\lim_{t\uparrow \rho}\int_{D_{\rho}}h^{\rho,e}_i(y)G^*_{\nu}(t,0;0,z)dz-h^{\rho,e}_i(y).
\end{array}
\end{equation}
As $\nu>0$ becomes small the main  mass of this increment mainly is concentrated on the cone $K^{\rho,t}_0$ with exponential spatial decay outside the cone (where in a viscosity limit $\nu\downarrow 0$ the domain $D_{\rho}$ can be substituted by $K^{\rho,t}_0$. More precisely for any given $\epsilon >0$ for  $\nu >0$ small enough we have
\begin{equation}\label{transformedrep2}
\begin{array}{ll}
|\delta u^{\rho,\nu,-,*,e}_{i}(\rho,0)|\leq 
{\big |}\lim_{\tau\uparrow \rho}\int_{K^{\rho,t}_0}\sum_jb^{\rho}_{j,t}
 u^{\rho,\nu,-,*,e}_{i,j}G^*_{\nu}(t,0;\tau,z)dzd\tau{\big |}\\
 \\
 +\lim_{\tau\uparrow \rho}\int_{K^{\rho,t}_0}{\big |}\left( \sum_{j}b^{\rho,s}_{j,j}u^{\int,\rho,\nu,-,*,e}_{j}u^{\rho,\nu,-,*,e}_{i,j}{\big |}+{\Big |}\frac{1}{\rho-\tau}u^{\rho,\nu,-,*,e}_i\right){\Big |}\times\\
 \times G^*_{\nu}(t,0;\tau,z)dzd\tau\\
 \\
+\lim_{\tau\uparrow \rho}{\Big |}\int_{K^{\rho,t}_0}\left(\sum_j\frac{1}{2}\left( (b^{\rho,s}_{j,j} u^{\int,\rho,\nu,-,*,e}_{i,j})+(b^{\rho,s}_{i,i} u^{\int,\rho,\nu,-,*,e}_{j,i})\right)u^{\rho,\nu,-,*,e}_j\right)\times\\
\times G^*_{\nu}(t,0;\tau,z)dzd\tau{\Big |}\\
\\
+\lim_{\tau\uparrow \rho}{\Big |}\int_{K^{\rho,t}_0}h^e_i(y)G^*_{\nu}(t,0;0,z)dz-h^{\rho,e}_i(y){\Big |}+\epsilon.
\end{array}
\end{equation}

The set of regular data $h^{\rho,e}_i,~1\leq i\leq D$ (which is even dense in $L^2$ for each component $1\leq i\leq D$) may be chosen as in item iv) above, i.e., we consider the set of bounded regular functions $g_i,~1\leq i\leq D$ with bounded derivatives multiplied by a factor of polynomial decay of order $10$, i.e., of the form $h^{\rho,e}_i=\frac{g_i}{1+|x|^{10}}$,
where $g_i$ are smooth functions with bounded derivatives (for example finite Fourier sums). We consider a variation of the iteration scheme above:
for an iteration index $q\geq 1$  the functions $u^{\rho,\nu,-,*,e,q}_{i},~1\leq i\leq D,~q\geq 1$ are determined by $u^{\rho,\nu,-,*,e,0}_i=h^{\rho,e}_i(0,.)$ for $q= 0$ and, recursively, for $q+1\geq 1$ by solutions of linearized equations of the form
\begin{equation}\label{transformediter}
\begin{array}{ll}
u^{\rho,\nu,-,*,e,q+1}_{i,t}-\nu \Delta u^{\rho,\nu,-,*,e,q+1}_{i}\\
\\
+\sum_jb^{\rho}_{j,t}
 u^{\rho,\nu,-,*,e,q+1}_{i,j}
 +\sum_{j}b^{\rho,s}_{j,j}u^{\int,\rho,\nu,-,*,e,q}_{j}u^{\rho,\nu,-,*,e,q}_{i,j}\\
 \\
=\sum_j\frac{1}{2}\left( (b^{\rho,s}_{j,j} u^{\int,\rho,\nu,-,*,e,q}_{i,j})+(b^{\rho,s}_{i,i} u^{\int,\rho,\nu,-,*,e,q}_{j,i})\right)u^{\rho,\nu,-,*,e,q}_j\\
\\
-\frac{1}{\rho-t}u^{\rho,\nu,-,*,e,q+1}_i.
\end{array}
\end{equation}
Note that the first order term in (\ref{transformediter}), i.e., the term $\sum_jb^{\rho}_{j,t}u^{\rho,\nu,-,q+1}_{i,j}$ has the iteration index $q+1$. Similar for the last damping term. They are part of a fundamental solution in a classical representation with product terms.  The idea here is that the representation of functional increments in form of  a sum of integrals with integrated product terms is a well-suited set up in order to get some preservation of polynomial decay. For these classical representations we use
the fundamental solution $p^{\rho,\nu}$ of the equation
\begin{equation}
p^{\rho,\nu}_{i,t}-\nu \Delta p^{\rho,\nu}+\sum_jb^{\rho}_{j,t} p^{\rho,\nu}_{,j}=
\frac{1}{\rho-t}p^{\rho,\nu}.
\end{equation}
In the following the symbol $C^{m,n}_{p,pol_k}$ denotes the space of functions with continuous time derivatives up to order $m$ and continuous bounded spatial derivatives up to order $n$ where the value function itself and all the derivatives are all of spatial polynomial decay of order $k$.
We may estimate the increment in original time $t$-coordinates. Furthermore we shall transform the cone to a cylinder in order to estimate the potential damping term using the observation in (\ref{transformedrep2}). We claim that
\begin{itemize}
 \item[iv)a)] for each iteration number $q$ the functional increments components  $$\delta u^{\rho,\nu,-,*,e,q}_i,~1\leq i\leq D$$ solving (\ref{transformediter})  starting with data $h^e_i$ of the regular data set of item iv) are in $C^{1,2}_{b,pol_{13}}$. The component functions $u^{\rho,\nu,-,*,e,q}_i,~1\leq i\leq D$ itself are in $C^{1,2}_{b,pol_{10}}$.
 \item[iv)b)] the last claim of item iv)a) holds in the limit, i.e. for the functions
 \begin{equation}
u^{\rho,\nu,*,e,-}_i(t,.)= h^{\rho,e}_i(.)\ast_{sp}p^{\rho,\nu}+\sum_{q\geq 1}\delta u^{\rho,\nu,-,*,e,q}_i(t,.),~1\leq i\leq D
 \end{equation}
Furthermore for $\nu >0$ small enough the increments $\delta u^{\rho,\nu,-,*}_i(t,.)$ and $\delta u^{\rho,\nu,-,*}_i(\sigma,.)$ decrease with $\rho$ such that for $\rho >0$ small enough we have especially
\begin{equation}\label{aberaber}
{\big |}u^{\rho,\nu,-,*,e}_i(\rho,0)- h^{\rho,e}_i(0){\big |}\leq 0.5 c ,
\end{equation}
if ${\big |}h^{\rho,e}_i(0){\big |}\geq c$ for some $c>0$.
\item[iv)c)] there is a subsequence $(\nu_k)_{k\geq 1}$and a viscosity limit $\lim_{k\downarrow 0}u^{\rho,\nu_k,-,*,e}_i,~ 1\leq i\leq D$ (encoding the regular part of the vorticity solution of the incompressible Euler equation), such that (\ref{aberaber}) holds for this viscosity limit.
\end{itemize}

Next we consider the three subclaims more closely.
Ad iv)a), from (\ref{transformediter}) we observe that for $q\geq 1$ the function $u^{\rho,\nu,-,*,e,q+1}_{i},~1\leq i\leq D$ has the representation

\begin{equation}\label{transformediterb}
\begin{array}{ll}
u^{\rho,\nu,-,*,e,q+1}_{i}=h^{\rho,e}_i\ast_{sp}p^{\rho}_{\nu}-{\Big (}\sum_{j}b^{\rho,s}_{j,j}u^{\int,\rho,\nu,-,*,e,q}_{j}u^{\rho,\nu,-,*,e,q}_{i,j}{\Big )}\ast p^{\rho,\nu}\\
 \\
+{\Big (}\sum_j\frac{1}{2}\left( (b^{\rho,s}_{j,j} u^{\int,\rho,\nu,-,*,e,q}_{i,j})+(b^{\rho,s}_{i,i} u^{\int,\rho,\nu,-,*,e,q}_{j,i})\right)u^{\rho,\nu,-,*,e,q}_j{\Big )}\ast p^{\rho,\nu}
\end{array}
\end{equation}
where $p^{\rho,\nu}$ is the fundamental solution defined above. Hence,
\begin{equation}\label{transformediterc}
\begin{array}{ll}
\delta u^{\rho,\nu,-,*,e,q+1}_{i}=u^{\rho,\nu,-,q+1}_{i}-u^{\rho,\nu,-,*,e,q}_{i}={\Big (}\sum_{j}b^{\rho,s}_{j,j}u^{\int,\rho,\nu,-,*,e,q}_{j}u^{\rho,\nu,-,*,e,q}_{i,j}{\Big )}\ast p^{\rho,\nu}\\
 \\
+{\Big (}\sum_j\frac{1}{2}\left( (b^{\rho,s}_{j,j} u^{\int,\rho,\nu,-,*,e,q}_{i,j})+(b^{\rho,s}_{i,i} u^{\int,\rho,\nu,-,*,e,q}_{j,i})\right)u^{\rho,\nu,-,*,e,q}_j{\Big )}\ast p^{\rho,\nu}\\
\\
-{\Big (}\sum_{j}b^{\rho,s}_{j,j}u^{\int,\rho,\nu,-,*,e,q-1}_{j}u^{\rho,\nu,-,*,e,q-1}_{i,j}{\Big )}\ast p^{\rho,\nu}\\
 \\
-{\Big (}\sum_j\frac{1}{2}\left( (b^{\rho,s}_{j,j} u^{\int,\rho,\nu,-,*,e,q-1}_{i,j})+(b^{\rho,s}_{i,i} u^{\int,\rho,\nu,-,*,e,q-1}_{j,i})\right)u^{\rho,\nu,-,*,e,q-1}_j{\Big )}\ast p^{\rho,\nu}.
\end{array}
\end{equation}


It is essential to observe the preservation of polynomial decay for the Leray projection term. The considerations for the Burgers term are similar (somewhat simpler).   We have a closer look at the Leray projection term.  Intersecting terms of the form
terms of the for $$\sum_j\frac{1}{2}\left( (b^{\rho,s}_{j,j} u^{\int,\rho,\nu,-,*,e,,q}_{i,j})+(b^{\rho,s}_{i,i} u^{\int,\rho,\nu,-,*,e,q}_{j,i})\right)u^{\rho,\nu,-,*,e,q}_j$$
the Leray projection terms on the right side of (\ref{transformediterc}) are of the form
\begin{equation}\label{Leray1}
\sum_j\frac{1}{2}\left( (b^{\rho,s}_{j,j} u^{\int,\rho,\nu,-,*,e,p}_{i,j})+(b^{\rho,s}_{i,i} u^{\int,\rho,\nu,-,p}_{j,i})\right)\delta u^{\rho,\nu,-,*,e,p}_j,
\end{equation}
and
\begin{equation}\label{Leray2}
\sum_j\frac{1}{2}\left( (b^{\rho,s}_{j,j} \delta u^{\int,\rho,\nu,-,*,e,p}_{i,j})+(b^{\rho,s}_{i,i} \delta u^{\int,\rho,\nu,-,*,e,p}_{j,i})\right) u^{\rho,\nu,-,e,p}_j,
\end{equation}
for iteration indices $p$. Here $b^{s}_j=\frac{2}{\pi}\arctan(x_j)$ such that in (\ref{Leray2}) we have
\begin{equation}\label{factor1}
(b^{\rho,s}_{j,j} \delta u^{\int,\rho,\nu,-,*,e,p}_{i,j})=
\int_{D_{\rho}}K^*_3(w)u^{\rho,\nu,-,*,e,p}_{i,j}(\sigma,y-w)\frac{\pi^3}{2^3}
\frac{\Pi_{i=1}^3(1+x_i^2)}{\left(\rho-t\right)^3}dw,
\end{equation}
where the latter term has its essentially mass on $K^{\rho,t}_0$ in the sense of a statement as at  (\ref{transformedrep2}) above.
It suffices to estimate convolutions of the Leray projection terms (\ref{Leray1}) and (\ref{Leray2}) with Gaussian upper bounds of the fundamental solution $p^{\rho,\nu}$.
As we have a damping term we have certainly such a Gaussian upper bound
\begin{equation}\label{gaussup}
\begin{array}{ll}
{\big |}p^{\nu,\rho}(t,x;s,y){\Big |}\leq \frac{C}{\sqrt{\pi (t-s}^D}\exp\left(-\frac{\lambda |x-y|^2}{4(t-s)} \right) 
\end{array}
\end{equation}
for some $\lambda>0$ and $C>0$. Rough standard convolution estimates of the
form (where we may assume that $q,p\geq D$)  
\begin{equation}
\int \frac{c_1}{(1+|x-y|)^p(1+|y|)^q}\leq \frac{c_2}{(1+|x|)^{p+q-D}}
\end{equation}
(where we may assume that $q,p\geq D$) that inductively we get polynomial decay of order $13-6-1$ for the factor in (\ref{Leray2}). The factor $u^{\rho,\nu,-,*,e,p}_j$ in (\ref{Leray2}) is inductively of order $10$ and the convolution with the Gaussian decreases the polynomial deal by no more than order $3$ (all considerations in dimension $D=3$) such that the (modulus of the ) term in (\ref{Leray2}) convoluted with the Gaussian upper bound of  $p^{\rho,\nu}$ has polynomial decay of order $10$. Similarly for the other incremental terms and for their spatial derivatives. The functions $u^{\rho,\nu,-,*,e,q}_i,~1\leq i\leq D$ themselves have additional terms involving initial data such that we get polynomial decay of order $10$ for these component functions inductively.

Ad iv)b), we first remark that the claim of iva) holds also in the limit $q\uparrow \infty$ because of the contraction  result above. Furthermore the contraction constant can be chosen independently of the viscosity $\nu>0$, because of Lipshitz continuity of the convoluted data in propbabilistic represenations of the solutions. Next recall that for $\nu>0$ the contribution of integrals in the probabilistic representations of the increment $|\delta u^{\rho,\nu,-,*,e}_{i}(\rho,.)|$ becomes small outside the cone $K^{\rho,t}_0$ as $\nu>0$ beomes small. We observed that for $\nu >0$ small enough for all $\epsilon >0$ there exists a $\rho >0$ such that
\begin{equation}\label{transformedrep2end}
\begin{array}{ll}
|\delta u^{\rho,\nu,-,*,e}_{i}(\rho,0)|\leq 
{\big |}\lim_{\tau\uparrow \rho}\int_{K^{\rho,t}_0}\sum_jb^{\rho}_{j,t}
 u^{\rho,\nu,-,*,e}_{i,j}G^*_{\nu}(t,0;\tau,z)dzd\tau{\big |}\\
 \\
 +\lim_{\tau\uparrow \rho}\int_{K^{\rho,t}_0}{\big |} \sum_{j}b^{\rho,s}_{j,j}u^{\int,\rho,\nu,-,*,e}_{j}u^{\rho,\nu,-,*,e}_{i,j}{\big |}G^*_{\nu}(t,0;\tau,z)dzd\tau\\
 \\
+\lim_{\tau\uparrow \rho}{\Big |}\int_{K^{\rho,t}_0}\left(\sum_j\frac{1}{2}\left( (b^{\rho,s}_{j,j} u^{\int,\rho,\nu,-,*,e}_{i,j})+(b^{\rho,s}_{i,i} u^{\int,\rho,\nu,-,*,e}_{j,i})\right)u^{\rho,\nu,-,*,e}_j\right)\times\\
\times G^*_{\nu}(t,0;\tau,z)dzd\tau{\Big |}\\
\\
+\lim_{\tau\uparrow \rho}{\Big |}\int_{K^{\rho,t}_0}h^{\rho,e}_i(y)G^*_{\nu}(t,0;0,z)dz-h^{\rho,e}_i(y){\Big |}\\
\\
+\lim_{\tau\uparrow \rho}\int_{K^{\rho,t}_0}{\Big |}\frac{1}{\rho-\tau}u^{\rho,\nu,-,*,e}_i{\Big |}G^*_{\nu}(t,0;\tau,z)dzd\tau+\epsilon.
\end{array}
\end{equation}
Here the cone $K^{\rho,t}_0$ is the cone in original coordinates where the time $t$ is in the interval $[0,\rho]$. The related integrals are all over the time interval $[0,\rho]$ with bounded integrands. Note that these integrands have upper bounds independent of $\nu$ and $\rho$., and note that for the volume of the cone we have
\begin{equation}
\mbox{vol}\left(K^{\rho,t}_0 \right) \sim \rho^D.
\end{equation}
\begin{rem}
For data $h^{\rho,e}_i,~ 1\leq i\leq D$ with $\max_{1\leq i\leq D}{\big |}h^{\rho,e}_i{\big |}_{H^m\cap C^m}=C$ the approopriate choice of $\rho$ depends clearly on the size of $C$. For small $C$ for some $m\geq 1$ the increment of the nonlinear terms is small on a small time interval $\rho$. However, even for 
However there also data where the first order derivatives are much larger then the data itself,and for such data we cannot find a sufficiently small upper bound of the increment. Hence, we claim only the existence of a dense set of data in $L^2$-sense which lead to a singularity after finite time.  
\end{rem}  
Hence, all terms go obviously to zero as $\rho\downarrow 0$ except for the damping term for which this seems less obvious. However, we have
\begin{equation}
\begin{array}{ll}
{\Bigg |}\int_{K^{\rho,t}_0} \frac{1}{\rho-s_0}Cdyds_0{\Bigg |}\leq \frac{8}{3}C\rho^3,
\end{array}
\end{equation}
where we may use the spatial transformation $dw_i=\frac{dy_i}{\rho-s_0}$ from the cone to a cylinder.
\begin{rem}
Note that in the last step we could also integrate with respect to time first 
\begin{equation}
\begin{array}{ll}
{\Bigg |}\int_{K^{\rho,t}_0} \frac{1}{\rho-s_0}Cdyds_0{\Bigg |}= \lim_{t\uparrow \rho}{\Bigg |}\int_{0}^t\int_{|y|\leq (\rho-s_0)} \frac{1}{\rho-s_0}Cdyds_0{\Bigg |}\\
\\
\leq \lim_{t\uparrow \rho}{\Bigg |}\int_{|y|\leq (\rho-t)} (\ln(\rho-t)-\ln(\rho)Cdyds_0{\Bigg |}\downarrow 0 \mbox{ as }\rho\downarrow 0.
\end{array}
\end{equation}
\end{rem}
Hence if $u^{\rho,\nu,-,*,e}_i(0,0)=c\neq 0$ for some $1\leq i\leq 3$, and if the data $h^e_i,~1\leq i\leq D$ are regular of the form as in item iv) above, then there is a small time horizon $\rho>0$ such that for all $\nu >0$
\begin{equation}
{\big |}\delta u^{\rho,\nu,-,*,e,}_i(\rho,0){\big |}\leq \frac{c}{2}, 
\end{equation}
such that
\begin{equation}
{\big |}\delta u^{\rho,\nu,-,*,e}_i(\rho,0){\big |}\geq \frac{c}{2}>0,
\end{equation}
and by the argument of item iii) and (using the regularity of the convoluted terms as shown above in this item by an iteration argument) this follows also for the viscosity limit $\nu \downarrow 0$, and, hence, it follows that the vorticity $\omega_i$ has a singular point at $(\rho,0)$.

Finally, ad iv)c) we remark that the strang polynomial decay allows us to obtain viscosity limits by using compactness arguments on compact spaces via spatial transformations using  classic function spaces $C^{m,n}$ for some $m,n$ compatible with the strength of polynomial decay (we need 2 order of polynomial decay for each spatial regularity order of the viscosity limit -we have considered this elsewhere).

The local analysis can be transferred to the compressible Navier Stokes equation in (\ref{navcomp}) considering inviscid limits $\mu ,\mu'\downarrow 0$. The extension is  quite straightforward on a domain of real degeneracies (in the sense explained above) in the region of incompressibility, i.e., in the region $D_e\cap \left\lbrace (t,x)| \mbox{div}v(t,x)=0\right\rbrace $ (if  this subset of real degeneracies is not empty). Otherwise the scalar densities in the local analysis above have to be replaced by fundamental solutions of linear parabolic systems which are coupled iteratively with transport equations of first order. The extension of the local analysis is then straightforward if it is ensured that the mass density is strictly positive almost surely. If the data are all strictly positive this seems to be true, but this will be considered elsewhere.

Next, arguing for a) of the preceding section we first remark that an attitude towards singularities which may be dubbed horror singulari, may often be a hindrance in studying certain classes of solutions which may be otherwise an important guide for further analysis- such as the classical singular solutions of the Einstein field equations may be an important guide for further investigation of gravity. A similar point of view with respect to singular solutions of equations in fluid mechanics seems appropriate. Sometimes it is thought that the solution is only local if there is a singularity. However, even the function $t\rightarrow \frac{x_0}{1-t}$ is a solution of the ODE $\stackrel{\cdot}{x}=x^2,~x(0)=x_0=1$ 
on the domain $\left(0,\infty\right) \setminus \left\lbrace  1\right\rbrace$, although we loose uniqueness after time $t=1$. Singularities may be artificial but not more then other aspects of our description of nature to our best knowledge. On the other hand the existence of singularities is a rather weak insight compared to what may be expected of a quantitative theory of turbulence. Next we argue that the existence of a vorticity singularity as constructed is a weak concept of turbulence in the sense that local solutions in the neighborhood of such a singularity exemplify the mentioned list of notions of turbulence in \cite{LL}. Assume that we have a local vorticity solution $\omega_i(t,y)=\frac{f_i(t,y)}{1-t}$ on a domain $[t_i,t_e]\times {\mathbb R}^3$ with a regular function $f_i$ which becomes singular at $(1,y_0)$ as $f_i(t_0,y_0)\neq 0$ for $1\in (t_i,t_e)$ for some $1\leq i\leq 3$. Then $\omega(.,y_0)$ changes sign at $t=1$ which implies extreme changes of the velocity magnitude in the neighborhood of the singularity compatible with notion $\alpha)$ of the description above. Second, the velocity integral $v_i=\int K_3\omega_i$ may imply that although we have extreme changes of velocity across $t=1$ we may have $v_i(1,y_0)=0$, i.e., a stationary point at $(1,y_0)$ and this is compatible with $\beta)$ of that list. Third, as there may be a stationary point of velocity zero at the singular point of vorticity $(1,y_0)$ the amplitudes of the vorticity may be large compared to the velocity itself, and this is compatible with $\gamma)$ of Landau's list. Fourth, even as we freeze time time $t_1$ close to $t=1$ small changes of $f_i(t_1,y)$ can lead to large changes with proportionality factor $\frac{1}{1-t_1}$ of the vorticity, which is compatible with $\delta)$ of the list above. Finally the extreme changes  of velocity near time $t=1$ is compatible with complicated trajectories and strong mixture as noted in $\epsilon)$ above. The latter point exemplifies the weakness of the singularity concept of turbulence, as the notion of strong mixture and related notions can be studied quantitatively by bifurcation theory. Finally we mention that the technique introduced in this article can be used to show that Navier Stokes equation models with $L^2$-force term can have singular solutions. 

\footnotetext[1]{\texttt{{kampen@mathalgorithm.de}, {kampen@wias-berlin.de}}.}
 
\begin{rem}
There has been a lot of research on singularity criteria of solutions for the Euler equation. The preceding text is an independent research work with a result which has not be covered elsewhere. As a non-specialist of the Euler equation I do not have the competence to choose and comment the huge amount of literature which has a content which is beyond that of the book of Majda and Bertozzi cited in \cite{MB} below.
\end{rem}

\end{document}